\theoremstyle{plain}
\newtheorem{theorem}{Theorem}
\newtheorem{corollary}[theorem]{Corollary}
\newtheorem{lemma}[theorem]{Lemma}
\theoremstyle{definition}
\newtheorem{remark}[theorem]{Remark}
\newcommand{\abs}[1]{\lvert#1\rvert}
\newcommand{\norm}[1]{\lVert#1\rVert}
\newcommand{\bigabs}[1]{\bigl\lvert#1\bigr\rvert}
\newcommand{\bignorm}[1]{\bigl\lVert#1\bigr\rVert}
\renewcommand{\le}{\leqslant}
\renewcommand{\ge}{\geqslant}
\renewcommand{\mid}{\::\:}
\newcommand{\term}[1]{{\textit{\textbf{#1}}}}
\def\api{{\abs{\!\pi\!}}}
\def\ftp{\otimes_{\!\scriptscriptstyle\api}\!}
\def\one{\mathbb 1}
\def\Io{I_{\rm o}}
\def\Ioc{I_{\rm oc}}
\DeclareMathOperator{\sign}{sign}
\begin{document}
\baselineskip 18pt

\title[Fremlin tensor products of concavifications]
      {Fremlin tensor products of concavifications of Banach lattices}

\author[V.G.~Troitsky]{Vladimir G. Troitsky}
\author[O.~Zabeti]{Omid Zabeti}

\address[V.G.~Troitsky]
  {Department of Mathematical
  and Statistical Sciences, University of Alberta, Edmonton,
  AB, T6G\,2G1. Canada}
\email{troitsky@ualberta.ca}

\address[O.~Zabeti]
  {Department of Pure Mathematics, Ferdowsi University of Mashhad,
   P.O. Box 1159, Mashhad 91775, Iran}
\email{ozabeti@yahoo.ca}

\thanks{The first author was supported by NSERC}
\keywords{Vector lattice, Banach lattice, Fremlin projective tensor product, diagonal
  of tensor product, concavification}
\subjclass[2010]{Primary: 46B42. Secondary: 46M05, 46B40, 46B45.}

\begin{abstract}
  Suppose that $E$ is a uniformly complete vector lattice and
  $p_1,\dots,p_n$ are positive reals. We prove that the diagonal of
  the Fremlin projective tensor product of $E_{(p_1)},\dots,E_{(p_n)}$
  can be identified with $E_{(p)}$ where $p=p_1+\dots+p_n$ and
  $E_{(p)}$ stands for the $p$-concavification of $E$. We also provide
  a variant of this result for Banach lattices. This extends the main
  result of~\cite{BBPTT}.
\end{abstract}

\date{\today}

\maketitle

\section{Introduction and motivation}

We start with some motivation. Let $E$ be a vector or a Banach lattice
of functions on some set $\Omega$, and consider a tensor product
$E\tilde\otimes E$ of $E$ with itself. It is often possible to view
$E\tilde\otimes E$ as a space of functions on the square
$\Omega\times\Omega$ with $(f\otimes g)(s,t)=f(s)g(t)$, where $f,g\in
E$ and $s,t\in\Omega$. In particular, restricting this function to the
diagonal $s=t$ gives just the product $fg$. Thus, the space of the
restrictions of the elements of $E\tilde\otimes E$ to the diagonal can
be identified with the space $\{fg\mid f,g\in E\}$, which is sometimes
called the square of $E$, (see, e.g.,~\cite{Buskes:01}). The concept
of the square can be extended to uniformly complete vector lattices as
the 2-concavification of $E$. Hence, one can expect that, for a
uniformly complete vector lattice, the diagonal of an appropriate
tensor product of $E$ with itself can be identified with the
2-concavification of $E$. This was stated and proved formally
in~\cite{BBPTT} for Fremlin projective tensor product of Banach
lattices. It was shown there that the diagonal of the tensor product
is lattice isometric to the 2-concavification of $E$; the diagonal was
defined as the quotient of the product over the ideal generated by all
elementary tensors $x\otimes y$ with $x\perp y$.

In the present paper, we extend this result. Let us again
provide some motivation. In the case when $E$ is a space of
functions on $\Omega$, one can think of its $p$-concavification as
$E_{(p)}=\{f^p\mid f\in E\}$ for $p>0$. In this case, for positive
real numbers $p_1,\dots,p_n$, the elementary tensors in
\begin{math}
  E_{(p_1)}\otimes\dots\otimes E_{(p_n)}
\end{math}
can be thought of as functions on $\Omega^n$ of the form
\begin{math}
  f_1^{p_1}\otimes\dots\otimes f_n^{p_n}(s_1,\dots,s_n)
  =f_1^{p_1}(s_1)\cdots f_n^{p_n}(s_n),
\end{math}
where $f_1,\dots,f_n\in E$ and $s_1,\dots,s_n\in\Omega$. The
restriction of this function to the diagonal is the product
$f_1^{p_1}\cdots f_n^{p_n}$, which is an element of $E_{(p)}$ where
$p=p_1+\dots+p_n$. That is, the diagonal of the tensor product of
$E_{(p_1)},\dots,E_{(p_n)}$ can be identified with $E_{(p)}$. In this
paper, we formally state and prove this fact for the case when $E$ is
a uniformly complete vector lattice in Section~\ref{VL} and when $E$
is a Banach lattice in Section~\ref{BL}.  In particular, this
extends the result of~\cite{BBPTT} to vector lattices and to the
product of an arbitrary number of copies of $E$ (a variant of the
latter statement was also independently obtained in~\cite{BB}).

\section{Products of vector lattices}
\label{VL}

Throughout this section, $E$ will stand for a uniformly
complete vector lattice. We need uniform completeness so that we can
use positive homogeneous function calculus in $E$, see, e.g.,
Theorem~5 in~\cite{Buskes:01}. It is easy to see that every uniformly
complete vector lattice is Archimedean.

Following~\cite[p.~53]{Lindenstrauss:79}, by $t^p$, where $t\in\mathbb
R$ and $p\in\mathbb R_+$, we mean $\abs{t}\cdot\sign{t}$; see also the
discussion in Section~1.2 of~\cite{BBPTT}. In particular, if
$p_1,\dots,p_n$ are positive reals and $p=p_1+\dots+p_n$ then
\begin{math}
  \abs{t}^{p_1}\abs{t}^{p_2}\dots\abs{t}^{p_n}=\abs{t}^p
\end{math}
while
\begin{math}
  t^{p_1}\abs{t}^{p_2}\dots\abs{t}^{p_n}=t^p
\end{math}
for every $t\in R$. It follows that
\begin{math}
  \abs{x}^{p_1}\abs{x}^{p_2}\dots\abs{x}^{p_n}=\abs{x}^p
\end{math}
and
\begin{math}
  x^{p_1}\abs{x}^{p_2}\dots\abs{x}^{p_n}=x^p
\end{math}
for every $x\in E$. In particular,
\begin{math}
  (x\abs{x}\cdots\abs{x})^{\frac{1}{n}}=x
\end{math}
for every $n\in\mathbb N$.

Suppose that $p$ is a positive real number. Using function
calculus, we can introduce new vector operations on $E$ via $x\oplus
y=(x^p+y^p)^{\frac{1}{p}}$ and $\alpha\odot x=\alpha^{\frac{1}{p}}x$,
where $x,y\in E$ and $\alpha\in\mathbb R$. Together with these new
operations and the original order and lattice structures, $E$ becomes
a vector lattice. This new vector lattice is denoted $E_{(p)}$ and
called the $p$-\term{concavification} of $E$. It is easy to see that
$E_{(p)}$ is still Archimedean.

We start by extending Theorem~1 and Corollary~2
in~\cite{Buskes:00}. Recall that an $n$-linear map $\varphi$ from
$E^n$ to a vector lattice $F$ is said to be \term{positive} if
$\varphi(x_1,\dots,x_n)\ge 0$ whenever $x_1,\dots,x_n\ge 0$ and
\term{orthosymmetric} if $\varphi(x_1,\dots,x_n)=0$ whenever
$\abs{x_1}\wedge\dots\wedge\abs{x_n}=0$; $\varphi$ is said to be a
\term{lattice $n$-morphism} if
\begin{math}
  \bigabs{\varphi(x_1,\dots,x_n)}=
  \varphi\bigl(\abs{x_1},\dots,\abs{x_n}\bigr)
\end{math}
for any $x_1,\dots,x_n\in E$.

\begin{theorem}\label{CK-ones}
  Suppose that $\varphi\colon C(K)^n\to F$, where $K$ is a compact
  Hausdorff space, $F$ is a vector lattice, $n\in\mathbb N$, and
  $\varphi$ is an orthosymmetric positive $n$-linear map. Then
  $\varphi(x_1,\dots,x_n)=\varphi(x_1\cdots x_n,\one,\dots,\one)$ for
  any $x_1,\dots,x_n\in C(K)$.
\end{theorem}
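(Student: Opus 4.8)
The plan is to reduce everything to the bilinear ($n=2$) case by induction, and to prove the bilinear case by a density-and-continuity argument that exploits the ring structure of $C(K)$. First I would fix notation: write $\one$ for the constant-one function on $K$, and recall that $C(K)$ is a unital $f$-algebra in which $xy$ has its usual meaning and $\abs{xy}=\abs{x}\cdot\abs{y}$. The key algebraic identity I want to propagate is that, for $x,y\in C(K)$,
\[
  xy\otimes\one\ \text{"equals"}\ x\otimes y
\]
after applying any orthosymmetric positive bilinear $\varphi$; once this is established, the general statement follows by peeling off one variable at a time (the restriction of $\varphi$ to $C(K)\times\dots\times C(K)\times\{\one\}$ with the last slot frozen is again orthosymmetric positive $(n-1)$-linear after absorbing the product of the first two arguments, using that $\abs{x_1}\wedge\cdots\wedge\abs{x_n}=0$ is inherited appropriately).

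For the bilinear case the main steps are as follows. Step 1: reduce to positive arguments. Since $\varphi$ is bilinear and positive, it is order bounded and hence, by standard vector-lattice theory, regular; decomposing $x=x^+-x^-$, $y=y^+-y^-$ and using orthosymmetry to kill the cross terms where the supports are disjoint, it suffices to prove $\varphi(x,y)=\varphi(xy,\one)$ for $x,y\ge 0$. Step 2: prove it first for $x=y$. For $x\ge 0$ and $\varepsilon>0$, the function $x$ can be uniformly approximated by functions of the form $\sum_i c_i\one_{U_i}$; more usefully, for a fixed $x\ge0$ I would approximate $x$ by "two-variable" step functions and use that on the region where coordinates agree we get $x^2$. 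Concretely: write $x=\sum_{k} a_k e_k + r$ where $e_k$ are pairwise-disjoint-support positive functions bounded by indicator-like bumps and $\norm{r}$ small; orthosymmetry gives $\varphi(e_j,e_k)=0$ for $j\ne k$, and on each bump $\varphi(e_k,e_k)$ should match $\varphi(e_k^2,\one)$ because $e_k$ is "close to a scalar times a component of $\one$" on its support. Then a limiting argument (using positivity of $\varphi$ to get the norm-continuity estimates) yields $\varphi(x,x)=\varphi(x^2,\one)$. Step 3: polarize. From $\varphi(x,x)=\varphi(x^2,\one)$ for all $x\ge 0$, apply it to $x+y$, $x$, $y$ with $x,y\ge 0$ and subtract; bilinearity gives $\varphi(x,y)+\varphi(y,x)=\varphi(2xy,\one)$, i.e. $\varphi(x,y)+\varphi(y,x)=2\varphi(xy,\one)$. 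Step 4: remove the symmetrization. This is where I would use orthosymmetry again, via a standard trick: replace $x$ by $x\wedge t\one$ and $y$ by $(y-t\one)^+$-type truncations so that one factor dominates the other on disjoint pieces, on each of which orthosymmetry forces one of $\varphi(x,y)$, $\varphi(y,x)$ to behave like the symmetric average; summing over a partition of $K$ into level sets and passing to the limit collapses $\varphi(x,y)$ and $\varphi(y,x)$ to the common value $\varphi(xy,\one)$.

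The step I expect to be the main obstacle is Step 4 — genuinely upgrading the symmetrized identity $\varphi(x,y)+\varphi(y,x)=2\varphi(xy,\one)$ to the asymmetric $\varphi(x,y)=\varphi(xy,\one)$. Orthosymmetry alone gives symmetry of $\varphi$ on "generic" disjointly supported pieces, but making this rigorous requires a careful truncation/partition argument and a uniform-continuity estimate so that the approximation errors vanish in the limit; this is exactly the place where uniform completeness of $C(K)$ (trivially true here, but the shape of the argument must survive transfer to the function-calculus setting of later sections) and the positivity-gives-norm-boundedness of $\varphi$ are used most delicately. Steps 1–3 are routine bilinear-algebra manipulations; Step 2 is a clean Stone–Weierstrass-style density argument; Step 4 is the heart of the matter and likely mirrors the corresponding argument in Buskes–van Rooij for orthosymmetric bilinear maps, which I would adapt to the $n$-linear inductive framework.
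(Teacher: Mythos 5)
Your reduction of the $n$-linear case to the bilinear case by ``peeling off'' one factor at a time is a sound strategy and is, in spirit, what the paper does: the paper fixes a positive element in the last slot, invokes the $(n-1)$-case for the remaining slots, and then repeats with a different slot fixed — this is just iterated use of the $n=2$ result with slightly different bookkeeping. (A small note on your version: to ``peel off'' the second slot you should fix slots $3,\dots,n$, not freeze the last slot at $\one$; but this is cosmetic and easily repaired.) Where the two proofs genuinely diverge is the $n=2$ base case. The paper simply cites Theorem~1 of Buskes--van Rooij [BvR00], which is exactly the statement $\varphi(x,y)=\varphi(xy,\one)$ for orthosymmetric positive bilinear maps on a unital $f$-algebra such as $C(K)$. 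You instead try to reconstruct that theorem from scratch, and this is where the real gaps are.

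In Step~2 you approximate $x\ge 0$ by $\sum_k a_k e_k$ where the $e_k$ are ``pairwise-disjoint-support positive functions bounded by indicator-like bumps.'' This cannot work in general $C(K)$. If the supports of the $e_k$ are pairwise disjoint and the $e_k$ are continuous, their sum vanishes on an open set separating the supports, so it cannot uniformly approximate a strictly positive $x$; conversely, if you allow overlapping bumps (as any continuous partition of unity must), orthosymmetry no longer kills the cross terms $\varphi(e_j,e_k)$. Genuine indicator functions $\one_U$ lie in $C(K)$ only when $U$ is clopen, i.e., essentially only when $K$ is totally disconnected, so the ``step function'' picture breaks down already for $K=[0,1]$. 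You would need a different mechanism (e.g., level-set truncations $x\wedge t\one$ and telescoping) to make Step~2 rigorous, and that is precisely the content of the cited Buskes--van Rooij argument.

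Step~4 — upgrading the polarized identity $\varphi(x,y)+\varphi(y,x)=2\varphi(xy,\one)$ to the asymmetric one — is the heart of the result, and you explicitly acknowledge that you are deferring to the Buskes--van Rooij argument rather than supplying one. What is actually needed there is the nontrivial fact that orthosymmetric positive bilinear maps are automatically symmetric, which is a theorem in its own right (Buskes--van Rooij, \emph{Squares of Riesz spaces}); your truncation/partition sketch gestures at it but does not constitute a proof. In short: your overall architecture matches the paper's, but the re-derivation of the bilinear base case is incomplete in two essential places. Citing [BvR00, Theorem~1] for $n=2$ and then running your induction cleanly would close both gaps and recover the paper's proof.
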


\begin{proof}
  The proof is by induction. The case $n=1$ is trivial. The case $n=2$
  follows from Theorem~1 in~\cite{Buskes:00}. Suppose that $n>2$ and
  the statement is true for $n-1$.  Suppose that $\varphi\colon
  C(K)^n\to F$ is orthosymmetric positive and $n$-linear. Fix $0\le
  z\in C(K)$ and define $\varphi_z\colon C(K)^{n-1}\to F$ via
  \begin{math}
    \varphi_z(x_1,\dots,x_{n-1})=\varphi(x_1,\dots,x_n,z).
  \end{math}
  Clearly, $\varphi_z$ is orthosymmetric, positive, and
  $(n-1)$-linear. By the induction hypothesis,
  \begin{math}
    \varphi_z(x_1,\dots,x_{n-1})=\varphi_z(x_1\cdots x_{n-1},\one,\dots,\one)
  \end{math}
  for all $x_1,\dots,x_{n-1}$ in $C(K)$.
  It follows that
  \begin{equation}
  \label{mid-ones}
    \varphi(x_1,\dots,x_n)=\varphi(x_1\cdots x_{n-1},\one,\dots,\one,x_n)
  \end{equation}
  for all $x_1,\dots,x_{n-1}$ and all $x_n>0$. By linearity,
  \eqref{mid-ones} remains true for all $x_1,\dots,x_n\in C(K)$ as
  $x_n=x_n^+-x_n^-$. Similarly,
  \begin{math}
    \varphi(x_1,\dots,x_n)=\varphi(x_1x_3\cdots x_n,x_2,\one,\dots,\one)
  \end{math}
  for all $x_1,\dots,x_n$ in $E$.
  Applying \eqref{mid-ones} to the latter expression, we get
  \begin{math}
    \varphi(x_1,\dots,x_n)=\varphi(x_1\cdots x_n,\one,\dots,\one).
  \end{math}
  This completes the induction.
\end{proof}

\begin{corollary}\label{indep}
   Suppose that $\varphi\colon E^n\to F$, where $E$ is a uniformly complete
   vector lattice,  $F$ is a vector lattice, $n\in\mathbb N$ and
  $\varphi$ is an orthosymmetric positive $n$-linear map. Then
  $\varphi(x_1,\dots,x_n)$ is determined by $(x_1\cdots
  x_n)^{\frac{1}{n}}$. Specifically,
  \begin{equation}
    \label{indep-fml}
    \varphi(x_1,\dots,x_n)=\varphi\bigl(x,\abs{x},\dots,\abs{x}\bigr)
  \end{equation}
  where $x=(x_1\cdots x_n)^{\frac{1}{n}}$.
\end{corollary}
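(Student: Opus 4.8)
The plan is to localize to a principal ideal, use the Kakutani representation to pass to a $C(K)$, and then invoke Theorem~\ref{CK-ones}. Put $u=\abs{x_1}\vee\dots\vee\abs{x_n}$ and let $E_u$ be the order ideal of $E$ generated by $u$, equipped with the order-unit norm $\norm{y}_u=\inf\{\lambda>0:\abs{y}\le\lambda u\}$. Since $E$ is uniformly complete, $E_u$ is complete in $\norm{\cdot}_u$, hence an AM-space with unit $u$; by Kakutani's theorem there are a compact Hausdorff space $K$ and a surjective lattice isometry $T\colon E_u\to C(K)$ with $Tu=\one$. The homogeneous function calculus in $E_u$ is, by construction (see~\cite{Buskes:01}), computed through exactly such a representation, so $T$ turns it into the pointwise calculus on $C(K)$: in particular $T\abs{y}=\abs{Ty}$, $T(y_1\cdots y_m)=(Ty_1)\cdots(Ty_m)$, and $T(y^{1/n})=(Ty)^{1/n}$ for $y,y_i\in E_u$, and the calculus in $E_u$ agrees with that in $E$. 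Observe that $w:=x_1\cdots x_n$, $x:=w^{1/n}$, and $\abs x$ all lie in $E_u$, as $C(K)$ is stable under these operations.

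Next, transport $\varphi$ to $C(K)$ by setting $\psi\colon C(K)^n\to F$, $\psi(f_1,\dots,f_n)=\varphi\bigl(T^{-1}f_1,\dots,T^{-1}f_n\bigr)$. Since $T^{-1}$ is a positive lattice isomorphism, $\psi$ is $n$-linear and positive; and it is orthosymmetric because $\abs{f_1}\wedge\dots\wedge\abs{f_n}=T\bigl(\abs{T^{-1}f_1}\wedge\dots\wedge\abs{T^{-1}f_n}\bigr)$, so this infimum vanishes iff $\abs{T^{-1}f_1}\wedge\dots\wedge\abs{T^{-1}f_n}=0$, in which case $\psi(f_1,\dots,f_n)=0$ by orthosymmetry of $\varphi$. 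Hence Theorem~\ref{CK-ones} is available for $\psi$.

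Now apply Theorem~\ref{CK-ones} to $\psi$ twice. With $f_i=Tx_i$ and $F:=Tw=(Tx_1)\cdots(Tx_n)$,
\[
  \varphi(x_1,\dots,x_n)=\psi(Tx_1,\dots,Tx_n)=\psi(F,\one,\dots,\one)=\varphi(w,u,\dots,u).
\]
On the other hand, with $f_1=Tx$ and $f_2=\dots=f_n=T\abs x=\abs{Tx}$, Theorem~\ref{CK-ones} gives $\psi(Tx,\abs{Tx},\dots,\abs{Tx})=\psi\bigl((Tx)\abs{Tx}^{\,n-1},\one,\dots,\one\bigr)$; and pointwise $(Tx)\abs{Tx}^{\,n-1}=(Tx)^n=\bigl((Tw)^{1/n}\bigr)^n=Tw=F$, using $x\abs x\cdots\abs x=x^n$, $(w^{1/n})^n=w$, and $Tx=(Tw)^{1/n}$. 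Therefore
\[
  \varphi\bigl(x,\abs x,\dots,\abs x\bigr)=\psi\bigl(Tx,\abs{Tx},\dots,\abs{Tx}\bigr)=\psi(F,\one,\dots,\one)=\varphi(w,u,\dots,u),
\]
and comparing the two displays gives~\eqref{indep-fml}. The only genuinely delicate point is the compatibility of the Kakutani isometry $T$ with products and $n$-th roots; but this is not an added hypothesis — it is built into the way the positive-homogeneous function calculus of a uniformly complete vector lattice is set up — so it requires no real work.
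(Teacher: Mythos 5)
Your proof is correct and takes essentially the same route as the paper: localize to the principal ideal generated by $\abs{x_1}\vee\dots\vee\abs{x_n}$, identify it with a $C(K)$ via Kakutani, restrict/transport $\varphi$, and apply Theorem~\ref{CK-ones} to both $(x_1,\dots,x_n)$ and $\bigl(x,\abs{x},\dots,\abs{x}\bigr)$ to reduce each to $\varphi(x_1\cdots x_n,\one,\dots,\one)$. You merely spell out the steps the paper compresses into ``by the theorem we get~\eqref{indep-fml}''; the only cosmetic blemish is reusing the letter $F$ for $Tw$ when $F$ already denotes the codomain.
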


\begin{proof}
  Suppose that $x_1,\dots,x_n\in E$. Let
  \begin{math}
    e=\abs{x_1}\vee\dots\vee\abs{x_n}
  \end{math}
  and consider the principal ideal $I_e$. Then $x_1,\dots,x_n\in
  I_e$. Since $I_e$ is lattice isomorphic to $C(K)$ for some compact
  Hausdorff space and the restriction of $\varphi$ to $(I_e)^n$ is
  still orthosymmetric, positive, and $n$-linear, by the theorem we
  get~\eqref{indep-fml}.
\end{proof}

\begin{remark}
  The expression
  \begin{math}
    \varphi\bigl(x,\abs{x},\dots,\abs{x}\bigr)
  \end{math}
  in~\eqref{indep-fml} may look non-symmetric at the first
  glance. Lemma~2 may be restated in a more ``symmetric'' form as
  follows:
  \begin{math}
    \varphi(x_1,\dots,x_n)=\varphi(x,\dots,x)
  \end{math}
  for every \emph{positive} $x_1,\dots,x_n$.
\end{remark}

Next, we are going to generalize Corollary~\ref{indep}.

\begin{theorem}\label{indep-p}
  Suppose that $\varphi\colon E_{(p_1)}\times\dots\times E_{(p_n)}\to
  F$, where $E$ is a uniformly complete vector lattice, $F$ is a
  vector lattice, $n\in\mathbb N$, $p_1,\dots,p_n$ are positive reals,
  and $\varphi$ is an orthosymmetric positive $n$-linear map. Then the
  following are true.
  \begin{enumerate}
  \item\label{p-indep} For all $x_1,\dots,x_n\in E$, we have
  \begin{math}
    \varphi(x_1,\dots,x_n)=\varphi\bigl(x,\abs{x},\dots,\abs{x}\bigr)
  \end{math}
  where $x=x_1^{p_1/p}\cdots x_n^{p_n/p}$ with $p=p_1+\dots+p_n$.
  \item\label{map-p-lin} The map
  $\hat\varphi\colon E_{(p)}\to F$ defined by
  \begin{math}
    \hat\varphi(x)=\varphi\bigl(x,\abs{x},\dots,\abs{x}\bigr)
  \end{math}
  is a positive linear map. If $\varphi$ is a lattice $n$-morphism
  then $\hat\varphi$ is a lattice homomorphism.
  \end{enumerate}
\end{theorem}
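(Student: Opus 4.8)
The plan is to transport the problem to an ordinary $\mathbb{R}$-linear setting. For each $i$ the map $j_i\colon E\to E_{(p_i)}$ given by $j_i(y)=y^{1/p_i}$ is a vector-lattice isomorphism: it is a bijection with inverse $z\mapsto z^{p_i}$, it takes the addition of $E$ to that of $E_{(p_i)}$ because $\bigl((y^{1/p_i})^{p_i}+(z^{1/p_i})^{p_i}\bigr)^{1/p_i}=(y+z)^{1/p_i}$ and scalar multiplication over because $(\alpha y)^{1/p_i}=\alpha^{1/p_i}y^{1/p_i}$, and it preserves the order and the modulus since $t\mapsto t^{1/p_i}$ is odd and increasing. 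Consequently
\begin{math}
  \psi\colon E^n\to F,\qquad \psi(y_1,\dots,y_n)=\varphi\bigl(y_1^{1/p_1},\dots,y_n^{1/p_n}\bigr),
\end{math}
is an ordinary positive $n$-linear map with $\varphi(z_1,\dots,z_n)=\psi\bigl(z_1^{p_1},\dots,z_n^{p_n}\bigr)$, and it is orthosymmetric: passing to a principal ideal $\cong C(K)$ one checks pointwise that $\bigabs{y_1}\wedge\dots\wedge\bigabs{y_n}=0$ forces $\bigabs{y_1}^{1/p_1}\wedge\dots\wedge\bigabs{y_n}^{1/p_n}=0$.

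For part~\eqref{p-indep}, set $a=(x_1^{p_1}\cdots x_n^{p_n})^{1/n}$. Corollary~\ref{indep} applied to $\psi$ gives
\begin{math}
  \varphi(x_1,\dots,x_n)=\psi\bigl(x_1^{p_1},\dots,x_n^{p_n}\bigr)=\psi(a,\abs a,\dots,\abs a),
\end{math}
while applying it to $\varphi(x,\abs x,\dots,\abs x)=\psi\bigl(x^{p_1},\abs x^{p_2},\dots,\abs x^{p_n}\bigr)$ gives $\varphi(x,\abs x,\dots,\abs x)=\psi(a',\abs{a'},\dots,\abs{a'})$ with $a'=\bigl(x^{p_1}\abs x^{p_2}\cdots\abs x^{p_n}\bigr)^{1/n}$. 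Now $x^{p_1}\abs x^{p_2}\cdots\abs x^{p_n}=x^p$ by the identity recorded at the beginning of this section, and $x^p=x_1^{p_1}\cdots x_n^{p_n}$ since $x=x_1^{p_1/p}\cdots x_n^{p_n/p}$ (check this pointwise in $I_{\abs{x_1}\vee\dots\vee\abs{x_n}}\cong C(K)$); hence $a=a'$, which proves~\eqref{p-indep}.

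For part~\eqref{map-p-lin} the idea is to linearize $\psi$ on the positive cone and identify $\hat\varphi$ with that linearization precomposed with the $p$-th power map. Restricting $\psi$ to a principal ideal $I_e\cong C(K)$ and invoking Theorem~\ref{CK-ones}, one obtains $\psi\bigl(c^{1/n},\dots,c^{1/n}\bigr)=\psi(c,e,\dots,e)$ for every $0\le c\in I_e$; hence $\lambda(a):=\psi\bigl(a^{1/n},\dots,a^{1/n}\bigr)$ is additive and positively homogeneous on $E_+$ and so extends to a positive linear map $\lambda\colon E\to F$. For arbitrary $x\in E$, Corollary~\ref{indep} rewrites $\hat\varphi(x)=\psi\bigl(x^{p_1},\abs x^{p_2},\dots,\abs x^{p_n}\bigr)$ as $\psi(b,\abs b,\dots,\abs b)$ with $b=(x^p)^{1/n}=(x^+)^{p/n}-(x^-)^{p/n}$; since the summands $(x^+)^{p/n}$ and $(x^-)^{p/n}$ are disjoint, a multilinear expansion of $\psi(b,\abs b,\dots,\abs b)$ has all mixed terms annihilated by orthosymmetry, leaving
\begin{math}
  \hat\varphi(x)=\psi\bigl((x^+)^{p/n},\dots,(x^+)^{p/n}\bigr)-\psi\bigl((x^-)^{p/n},\dots,(x^-)^{p/n}\bigr)=\lambda\bigl((x^+)^p\bigr)-\lambda\bigl((x^-)^p\bigr)=\lambda(x^p).
\end{math}
Thus $\hat\varphi=\lambda\circ(\,\cdot\,)^p$. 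Since the $p$-th power map carries the operations $\oplus,\odot$ of $E_{(p)}$ to the operations of $E$ — indeed $(x\oplus y)^p=x^p+y^p$ and $(\alpha\odot x)^p=\alpha x^p$, using $(c^{1/p})^p=c$ for scalars and elements — and preserves the order, $\hat\varphi$ is positive and linear on $E_{(p)}$; and if $\varphi$ is a lattice $n$-morphism then $\bigabs{\hat\varphi(x)}=\bigabs{\varphi(x,\abs x,\dots,\abs x)}=\varphi(\abs x,\abs x,\dots,\abs x)=\hat\varphi(\abs x)$, so the linear map $\hat\varphi$ is a lattice homomorphism.

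The step I expect to be the genuine obstacle is the additivity of $\hat\varphi$: the twisted addition $\oplus$ on $E_{(p)}$ is not related coordinatewise to any of the twisted additions of the factors $E_{(p_i)}$, so it cannot be read off directly from $n$-linearity of $\varphi$; it has to be manufactured in two moves, namely linearizing the genuinely multilinear $\psi$ on the positive cone through the $C(K)$-representation and Theorem~\ref{CK-ones}, and then reinstating signs via the orthosymmetry-driven vanishing of the mixed terms. Once $\hat\varphi=\lambda\circ(\,\cdot\,)^p$ is established, positivity, linearity, and the lattice-homomorphism property are routine consequences of the positively homogeneous function calculus.
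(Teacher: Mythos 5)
Your proof is correct. For part~\eqref{p-indep} it is essentially the same as the paper's: define $\psi(y_1,\dots,y_n)=\varphi\bigl(y_1^{1/p_1},\dots,y_n^{1/p_n}\bigr)$, observe it is an orthosymmetric positive $n$-linear map on $E^n$, and apply the earlier result (you invoke Corollary~\ref{indep} directly, while the paper applies Theorem~\ref{CK-ones} inside a principal ideal and then passes to $E$ — the same thing after unwinding Corollary~\ref{indep}'s proof). The one place where care is needed, the identity $x^p=x_1^{p_1}\cdots x_n^{p_n}$ for $x=x_1^{p_1/p}\cdots x_n^{p_n/p}$ with the odd-power convention, you correctly flag and resolve pointwise in $C(K)$.

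For part~\eqref{map-p-lin} you take a genuinely different route. The paper proves additivity directly: still working with $\psi$ inside $I_e\cong C(K)$, it rewrites $\hat\varphi(z)=\psi(z^p,\one,\dots,\one)$ and then uses ordinary linearity of $\psi$ in its first slot to get $\psi\bigl((x^p+y^p),\one,\dots,\one\bigr)=\psi(x^p,\one,\dots,\one)+\psi(y^p,\one,\dots,\one)$. You instead build an honest linearization $\lambda(a)=\psi(a^{1/n},\dots,a^{1/n})$ of the diagonal of $\psi$ on $E_+$, verify its additivity by the same $\one$-trick, extend it to a positive linear map $\lambda\colon E\to F$, and then establish the clean factorization $\hat\varphi=\lambda\circ(\,\cdot\,)^p$ by the disjointness/orthosymmetry expansion of $\psi(b,\abs b,\dots,\abs b)$ with $b=(x^+)^{p/n}-(x^-)^{p/n}$. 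Since $(\,\cdot\,)^p\colon E_{(p)}\to E$ is a vector-lattice isomorphism, linearity, positivity, and (together with the lattice $n$-morphism identity $\bigabs{\hat\varphi(x)}=\hat\varphi(\abs x)$) the lattice-homomorphism property all drop out at once. Your route is longer but gives a cleaner structural picture and handles the lattice-homomorphism clause explicitly, which the paper leaves to the reader; the paper's route is shorter and more direct for bare additivity.
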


\begin{proof}
  \eqref{p-indep}
  First, we prove the statement for the case $E=C(K)$ for some
  Hausdorff compact space $K$. Define $\psi\colon E^n\to F$ via
  \begin{math}
    \psi(x_1,\dots,x_n)=
     \varphi\bigl(x_1^{1/p_1},\dots,x_n^{1/p_n}\bigr).
  \end{math}
  It is easy to see that $\psi$ is an orthosymmetric positive
  $n$-linear map. Hence, applying Theorem~\ref{CK-ones} to $\psi$, we
  get
  \begin{multline*}
    \varphi(x_1,\dots,x_n)
    =\psi\bigl(x_1^{p_1},\dots,x_n^{p_n}\bigr)
    =\psi\bigl(x_1^{p_1}\cdots x_n^{p_n},\one,\dots,\one\bigr)\\
    =\psi\bigl(x^p,\one,\dots,\one\bigr)
    =\psi\bigl(x^{p_1},\abs{x}^{p_2},\dots,\abs{x}^{p_n}\bigr)
    =\varphi\bigl(x,\abs{x},\dots,\abs{x}\bigr).
  \end{multline*}
  Now suppose that $E$ is a uniformly complete vector lattice. Choose
  $e\in E_+$ such that $x_1,\dots,x_n\in I_e$. Recall that $I_e$ is
  lattice isomorphic to $C(K)$ for some Hausdorff compact space
  $K$. It is easy to see that $(I_e)_{(p_i)}$ is an ideal in
  $E_{(p_i)}$. The restriction of $\varphi$ to
  \begin{math}
    (I_e)_{(p_1)}\times\dots\times(I_e)_{(p_n)}
  \end{math}
  is again an orthosymmetric positive $n$-linear map, so the
  conclusion follows from the first part of the proof.

  \eqref{map-p-lin} The proof that $\hat\varphi(\alpha\odot
  x)=\alpha\hat\varphi(x)$ is straightforward. We proceed to check
  additivity. Again, suppose first that $E=C(K)$ for some compact
  Hausdorff space $K$; let $\psi$ be as before. Take any $x,y\in E$
  and put $z=x\oplus y$ in $E_{(p)}$, i.e.,
  $z=(x^p+y^p)^{1/p}$. Then, again applying Theorem~\ref{CK-ones} to
  $\psi$, we have
  \begin{multline*}
   \varphi\bigl(z,\abs{z},\dots,\abs{z}\bigr)
   =\psi\bigl(z^{p_1},\abs{z}^{p_2},\dots,\abs{z}^{p_n}\bigr)
   =\psi(z^p,\one,\dots,\one)\\
   =\psi(x^p,\one,\dots,\one)+\psi(y^p,\one,\dots,\one)
   =\varphi\bigl(x,\abs{x},\dots,\abs{x}\bigr)
    +\varphi\bigl(y,\abs{y},\dots,\abs{y}\bigr).
  \end{multline*}
  Hence,
  \begin{equation}
  \label{sum}
    \varphi\bigl((x^p+y^p)^{\frac{1}{p}},\abs{x^p+y^p}^{\frac{1}{p}},\dots
      \abs{x^p+y^p}^{\frac{1}{p}}\bigr)=
    \varphi\bigl(x,\abs{x},\dots,\abs{x}\bigr)
     +\varphi\bigl(y,\abs{y},\dots,\abs{y}\bigr)
  \end{equation}
  Now suppose that $E$ is an arbitrary uniformly
  complete vector lattice and $x,y\in E$. Taking
  $e=\abs{x}\vee\abs{y}$ and proceeding as in~\eqref{p-indep}, one can
  see that~\eqref{sum} still holds,
  which yields $\hat\varphi(x\oplus y)=\hat\varphi(x)+\hat\varphi(y)$.
\end{proof}

\begin{corollary}\label{wgm}
  Let $E$ be a uniformly complete vector lattice and $p_1,\dots,p_n$
  positive reals; put $p=p_1+\dots+p_n$. For $x_1,\dots,x_n\in E$,
  define $\mu(x_1,\dots,x_n)=x_1^{p_1/p}\cdots
  x_n^{p_n/p}$. Then
  \begin{enumerate}
  \item\label{wgm-mu} $\mu\colon E_{(p_1)}\times\dots\times E_{(p_n)}\to
    E_{(p)}$ is an orthosymmetric lattice $n$-morphism;
  \item\label{wgm-corr} For every vector lattice $F$ there is a one to one
    correspondence between orthosymmetric positive $n$-linear maps
    $\varphi\colon E_{(p_1)}\times\dots\times E_{(p_n)}\to F$
    and positive linear maps $T\colon E_{(p)}\to F$ such that
    $\varphi=T\mu$ and
    $Tx=\varphi\bigl(x,\abs{x},\dots,\abs{x}\bigr)$. Moreover,
    $\varphi$ is a lattice $n$-morphism iff $T$ is a lattice
    homomorphism.
  \end{enumerate}
    \begin{figure}[!htb]\caption{}\label{mu-varphi}
    \begin{math}
      \xymatrix{
      E_{(p_1)}\times\dots\times E_{(p_n)} \ar[rr]^(.6)\varphi \ar[d]_\mu &
      & F \\
      E_{(p)} \ar[urr]^T &&
      }
     \end{math}
  \end{figure}
\end{corollary}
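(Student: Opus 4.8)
The plan is to deduce both parts of the corollary from Theorem~\ref{indep-p}, which has already done the substantive work; what remains is careful bookkeeping with the function calculus and with the twisted operations $\oplus$, $\odot$ on the concavifications. For~\eqref{wgm-mu}, I would first observe that the expression $x_1^{p_1/p}\cdots x_n^{p_n/p}$ is positively homogeneous of degree $\sum_{i=1}^n p_i/p=1$, so by uniform completeness and positive homogeneous function calculus (Theorem~5 in~\cite{Buskes:01}) it defines a genuine element of $E$; since $E_{(p)}$ has the same underlying set as $E$, we may regard $\mu$ as a map into $E_{(p)}$. To check the remaining properties I would, exactly as in the proof of Corollary~\ref{indep}, pass to a principal ideal $I_e\cong C(K)$ containing all the elements in question, turning every claim into a pointwise statement about continuous functions. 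Homogeneity of $\mu$ in the $i$-th slot then amounts to $(\alpha^{1/p_i}x_i)^{p_i/p}=\alpha^{1/p}x_i^{p_i/p}$ (with the $\sign$ convention in force), additivity in the $i$-th slot to $\bigl((x_i^{p_i}+y_i^{p_i})^{1/p_i}\bigr)^{p_i/p}=(x_i^{p_i}+y_i^{p_i})^{1/p}$ together with distributivity of the pointwise product, the lattice-$n$-morphism identity to $\bigabs{x^a}=\abs x^a$ for $a>0$ and $\abs{uv}=\abs u\,\abs v$ for products formed by function calculus, and orthosymmetry to the observation that if $\abs{x_1}\wedge\dots\wedge\abs{x_n}=0$ then at each point of $K$ some $\abs{x_i}$ vanishes, forcing $\abs{x_1}^{p_1/p}\cdots\abs{x_n}^{p_n/p}=0$ and hence $\mu(x_1,\dots,x_n)=0$.

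For~\eqref{wgm-corr}, given an orthosymmetric positive $n$-linear $\varphi$ I would take $T=\hat\varphi$, which is positive and linear on $E_{(p)}$ by part~\eqref{map-p-lin} of Theorem~\ref{indep-p}; writing $x=x_1^{p_1/p}\cdots x_n^{p_n/p}$, part~\eqref{p-indep} of that theorem gives $T\mu(x_1,\dots,x_n)=\hat\varphi(x)=\varphi(x,\abs x,\dots,\abs x)=\varphi(x_1,\dots,x_n)$, that is, $\varphi=T\mu$. Conversely, for a positive linear $T\colon E_{(p)}\to F$ the composite $T\mu$ is $n$-linear, positive, and orthosymmetric by~\eqref{wgm-mu}. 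To see the two assignments are mutually inverse, one direction is the identity $\varphi=T\mu$ just noted, and for the other I would compute $\hat\varphi(x)=\varphi(x,\abs x,\dots,\abs x)=T\mu(x,\abs x,\dots,\abs x)=T\bigl(x^{p_1/p}\abs x^{p_2/p}\cdots\abs x^{p_n/p}\bigr)=Tx$, using the identity $x^{q_1}\abs x^{q_2}\cdots\abs x^{q_n}=x^{q_1+\dots+q_n}$ recorded at the start of the section with $q_i=p_i/p$, so that the exponents sum to $1$. The ``moreover'' clause is then immediate in both directions: if $\varphi$ is a lattice $n$-morphism then $T=\hat\varphi$ is a lattice homomorphism by Theorem~\ref{indep-p}, and if $T$ is a lattice homomorphism then $\bigabs{\varphi(x_1,\dots,x_n)}=\bigabs{T\mu(x_1,\dots,x_n)}=T\bigabs{\mu(x_1,\dots,x_n)}=T\mu(\abs{x_1},\dots,\abs{x_n})=\varphi(\abs{x_1},\dots,\abs{x_n})$.

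I do not expect a genuine obstacle here, since the corollary is essentially a restatement of Theorem~\ref{indep-p}. The one place demanding care is keeping the three different pairs of linear operations straight — those of $E$, those of the factors $E_{(p_i)}$, and those of $E_{(p)}$ — when verifying that $\mu$ really is $n$-linear, and making sure that each composite of the correspondence is evaluated with respect to the correct structure.
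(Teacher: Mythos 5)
Your proposal is correct and follows essentially the same route as the paper: part (ii) is obtained from Theorem~\ref{indep-p} exactly as in the paper's proof (set $T=\hat\varphi$ and use parts (i) and (ii) of that theorem), and your $C(K)$-representation check of part (i) just fills in the details that the paper compresses to ``straightforward,'' using the same local-to-$C(K)$ technique the paper itself uses in Corollary~\ref{indep}. Your additional verifications — that the two assignments are mutually inverse via $x^{p_1/p}\abs{x}^{p_2/p}\cdots\abs{x}^{p_n/p}=x$, and that a lattice homomorphism $T$ yields a lattice $n$-morphism $T\mu$ — are correct and make explicit what the paper leaves implicit.
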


\begin{proof}
  \eqref{wgm-mu} is straightforward. Note that
  $\mu\bigl(x,\abs{x},\dots,\abs{x}\bigr)=x$ for every $x\in E$.

  \eqref{wgm-corr}
  If $T\colon E_{(p)}\to F$ is a positive linear map then setting
  $\varphi:=T\mu$ defines an orthosymmetric positive $n$-linear map on
  $E_{(p_1)}\times\dots\times E_{(p_n)}$ and
  \begin{displaymath}
    \varphi\bigl(x,\abs{x},\dots,\abs{x}\bigr)
    =T\mu\bigl(x,\abs{x},\dots,\abs{x}\bigr)
    =Tx.
  \end{displaymath}
  Conversely, suppose that  $\varphi\colon E_{(p_1)}\times\dots\times
  E_{(p_n)}\to F$ is an orthosymmetric positive $n$-linear map; define
  $T\colon E_{(p)}\to F$ via
  $Tx:=\varphi\bigl(x,\abs{x},\dots,\abs{x}\bigr)$. Then $T$ is a
  positive linear operator by Theorem~\ref{indep-p}\eqref{map-p-lin}.
  Given $x_1,\dots,x_n\in E$, put $x=\mu(x_1,\dots,x_n)$. It follows
  from Theorem~\ref{indep-p}\eqref{p-indep} that
  \begin{displaymath}
    T\mu(x_1,\dots,x_n)
    =Tx
    =\varphi\bigl(x,\abs{x},\dots,\abs{x}\bigr)
    =\varphi(x_1,\dots,x_n),
  \end{displaymath}
  so that $T\mu=\varphi$.
\end{proof}

We will use the fact, due to Luxemburg and Moore, that if $J$ is an
ideal in a vector lattice $F$ then the quotient vector lattice $F/J$
is Archimedean iff $J$ is uniformly closed, see, e.g., Theorem~2.23
in~\cite{Aliprantis:06} and the discussion preceding it. Recall that
given a set $A$ in a vector lattice $F$, $A$ is uniformly closed if
the limit of every uniformly convergent net in $A$ is contained in $A$
(it is easy to see that it suffices to consider sequences). The
uniform closure of a set $A$ in $F$ is the set of the uniform limits
of sequences in $A$; it can be easily verified that this set is uniformly
closed.
% a diagonal algorithm.
Clearly, the uniform closure of an ideal is an
ideal. Hence, for every set $A$, the uniform closure of the ideal
generated by $A$ is the smallest uniformly closed ideal containing
$A$.

For Archimedean vector lattices $E_1,\dots,E_n$, we write
$E_1\bar\otimes\dots\bar\otimes E_n$ for their Fremlin vector lattice
tensor product; see~\cite{Fremlin:72,Fremlin:74}.

\begin{theorem}
  Let $E$ be a uniformly complete vector lattice, $p_1,\dots,p_n$
  positive reals, and $\Io$ the uniformly closed ideal in
  $E_{(p_1)}\bar\otimes\dots\bar\otimes E_{(p_n)}$ generated by the
  elementary tensors of form $x_1\otimes\dots\otimes x_n$ with
  $\bigwedge_{i=1}^n\abs{x_i}=0$. Then the quotient
  \begin{math}
    \bigl(E_{(p_1)}\bar\otimes\dots\bar\otimes E_{(p_n)}\bigr)/\Io
  \end{math}
  is lattice isomorphic to $E_{(p)}$.
\end{theorem}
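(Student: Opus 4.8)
The plan is to exploit the universal property of the Fremlin projective tensor product together with the factorization established in Corollary~\ref{wgm}. Recall that $E_{(p_1)}\bar\otimes\dots\bar\otimes E_{(p_n)}$ is generated (as a vector lattice) by the elementary tensors, and its defining feature is that $n$-linear lattice-bimorphism-type maps out of the product $E_{(p_1)}\times\dots\times E_{(p_n)}$ lift uniquely to lattice homomorphisms out of the tensor product. I would first apply this to the map $\mu$ of Corollary~\ref{wgm}\eqref{wgm-mu}, which is an (orthosymmetric) lattice $n$-morphism into $E_{(p)}$; this yields a lattice homomorphism $\bar\mu\colon E_{(p_1)}\bar\otimes\dots\bar\otimes E_{(p_n)}\to E_{(p)}$ with $\bar\mu(x_1\otimes\dots\otimes x_n)=x_1^{p_1/p}\cdots x_n^{p_n/p}$. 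Since $\mu$ is orthosymmetric, $\bar\mu$ kills every generator $x_1\otimes\dots\otimes x_n$ with $\bigwedge\abs{x_i}=0$, hence kills the ideal they generate, and — being a lattice homomorphism, so its kernel is a uniformly closed ideal once we know the range is Archimedean, which it is — it kills $\Io$. Therefore $\bar\mu$ descends to a lattice homomorphism $q\colon\bigl(E_{(p_1)}\bar\otimes\dots\bar\otimes E_{(p_n)}\bigr)/\Io\to E_{(p)}$.

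Next I would construct the inverse. Since every $x\in E_{(p)}$ satisfies $x=\mu\bigl(x,\abs{x},\dots,\abs{x}\bigr)$, the map $q$ is surjective, and a natural candidate for a one-sided inverse is $x\mapsto \bigl[x\otimes\abs{x}\otimes\dots\otimes\abs{x}\bigr]$, the class in the quotient of the elementary tensor. Call this map $\iota$. One checks $q\circ\iota=\mathrm{id}_{E_{(p)}}$ directly from $\bar\mu\bigl(x\otimes\abs{x}\otimes\dots\otimes\abs{x}\bigr)=x$. To see that $\iota$ is also a left inverse of $q$, i.e.\ that $\iota\circ q=\mathrm{id}$ on the quotient, it suffices to show that in the quotient every elementary tensor $x_1\otimes\dots\otimes x_n$ coincides with $x\otimes\abs{x}\otimes\dots\otimes\abs{x}$ modulo $\Io$, where $x=x_1^{p_1/p}\cdots x_n^{p_n/p}$; since elementary tensors span a sublattice whose generated ideal is dense (indeed, order/uniformly dense) in the Fremlin tensor product, and the quotient is Archimedean, this identity on elementary tensors forces $\iota q=\mathrm{id}$.

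The key identity in the quotient is thus: $x_1\otimes\dots\otimes x_n\equiv \bigl(x_1^{p_1/p}\cdots x_n^{p_n/p}\bigr)\otimes\bigl|x_1^{p_1/p}\cdots x_n^{p_n/p}\bigr|\otimes\dots \pmod{\Io}$. I would prove this exactly as the theorems above were proved: reduce to the case $E=C(K)$ via the principal ideal $I_e$ with $e=\abs{x_1}\vee\dots\vee\abs{x_n}$ (noting $(I_e)_{(p_i)}$ is an ideal in $E_{(p_i)}$ and the Fremlin tensor product of the ideals embeds as an ideal, so $\Io$ behaves well under restriction), then in $C(K)$ use positive-homogeneous function calculus: the map $\varphi\colon E_{(p_1)}\times\dots\times E_{(p_n)}\to \bigl(E_{(p_1)}\bar\otimes\dots\bar\otimes E_{(p_n)}\bigr)/\Io$ sending $(x_1,\dots,x_n)$ to the class of $x_1\otimes\dots\otimes x_n$ is an orthosymmetric positive $n$-linear map (orthosymmetry is precisely the statement that generators with $\bigwedge\abs{x_i}=0$ land in $\Io$), so Theorem~\ref{indep-p}\eqref{p-indep} applies verbatim and gives $\varphi(x_1,\dots,x_n)=\varphi\bigl(x,\abs{x},\dots,\abs{x}\bigr)$, which is exactly the desired identity.

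The main obstacle I anticipate is the bookkeeping around uniform closedness and Archimedeanness: one must check that the range $E_{(p)}$ is Archimedean (stated in the excerpt), that $\Io$ is genuinely uniformly closed so the quotient is Archimedean (built into its definition), and — more delicately — that the linear bijection $q$ is automatically a lattice isomorphism. For this last point I would argue that $q$ is a surjective lattice homomorphism with a linear inverse $\iota$; since $\iota$ maps positive elements of $E_{(p)}$ to (classes of) positive tensors (as $x\ge 0$ gives $x=\abs{x}$ and $x\otimes\dots\otimes x\ge 0$), the inverse is positive as well, and a positive linear bijection between vector lattices whose inverse is positive is a lattice isomorphism. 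Verifying that $\iota$ is additive reduces once more, via the $I_e \cong C(K)$ passage, to the function-calculus identity $(x^p+y^p)^{1/p}\otimes\dots\equiv x\otimes\dots + y\otimes\dots \pmod{\Io}$, which is the content of equation~\eqref{sum} read inside the quotient; so no genuinely new computation is needed beyond what Theorem~\ref{indep-p} already supplies.
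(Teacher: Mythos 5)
Your proposal is correct and takes essentially the same route as the paper: lift $\mu$ via the universal property of the Fremlin vector lattice tensor product to a lattice homomorphism that kills $\Io$, apply Corollary~\ref{wgm} (equivalently Theorem~\ref{indep-p}) to the orthosymmetric lattice $n$-morphism $(x_1,\dots,x_n)\mapsto x_1\otimes\dots\otimes x_n+\Io$ to build the inverse, and extend the identity from classes of algebraic tensors to the whole quotient by Fremlin's relative-uniform density together with the Archimedean property of the quotient. The only redundancy is that invoking Corollary~\ref{wgm}\eqref{wgm-corr} directly already gives you that $\iota$ (i.e.\ $T$) is a lattice homomorphism, so your separate $I_e\cong C(K)$ reduction and the extra additivity check via~\eqref{sum} are unnecessary.
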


\begin{proof}
  Consider the diagram
  \begin{equation}
  \label{q-otimes}
     E_{(p_1)}\times\dots\times E_{(p_n)}
     \xrightarrow{\otimes}
     E_{(p_1)}\bar\otimes\dots\bar\otimes E_{(p_n)}
     \xrightarrow{q}
    \bigl(E_{(p_1)}\bar\otimes\dots\bar\otimes E_{(p_n)}\bigr)/\Io
  \end{equation}
  where $q$ is the quotient map; $q(u)=u+\Io=:\tilde u$ for $u\in
  E_{(p_1)}\bar\otimes\dots\bar\otimes E_{(p_n)}$.

  Let $\mu$ be as in Corollary~\ref{wgm}. By the universal property of the
  tensor product (see, e.g., \cite[Theorem~4.2(i)]{Fremlin:72}), there
  exists a lattice homomorphism
  \begin{math}
    M\colon E_{(p_1)}\bar\otimes\dots\bar\otimes E_{(p_n)}\to E_{(p)}
  \end{math}
  such that $M(x_1\otimes\dots\otimes x_n)=\mu(x_1,\dots,x_n)$ for all
  $x_1,\dots,x_n$. Since $\mu$ is orthosymmetric,
  $M(x_1\otimes\dots\otimes x_n)=0$ whenever
  $\bigwedge_{i=1}^n\abs{x_i}=0$. Since $M$ is a lattice homomorphism,
  it follows that $M$ vanishes on $\Io$. Therefore, the quotient
  operator $\widetilde M$ is well defined: for $u\in
  E_{(p_1)}\bar\otimes\dots\bar\otimes E_{(p_n)}$ we have $\widetilde
  M\tilde u=Mu$. Furthermore, since $q$ is a lattice homomorphism
  (see, e.g., \cite[Theorem~2.22]{Aliprantis:06}), it is easy to see
  that $\widetilde M$ is a lattice homomorphism as well.

  Note that $M$ is onto because for every $x\in E_{(p)}$ we have
  \begin{math}
    x=M\bigl(x\otimes\abs{x}\otimes\dots\otimes\abs{x}\bigr).
  \end{math}
  It follows that $\widetilde M$ is onto. It is left to show that
  $\widetilde M$ is one-to-one.

  The composition map $q\otimes$ in~\eqref{q-otimes} is an
  orthosymmetric lattice $n$-morphism. By Corollary~\ref{wgm},
  there is a lattice homomorphism
  \begin{math}
    T\colon E_{(p)}\to
    \bigl(E_{(p_1)}\bar\otimes\dots\bar\otimes E_{(p_n)}\bigr)/\Io
  \end{math}
  such that $q\otimes=T\mu$ and
  \begin{displaymath}
      Tx=(q\otimes)\bigl(x,\abs{x},\dots,\abs{x}\bigr)
       =x\otimes\abs{x}\otimes\dots\otimes\abs{x}+\Io
  \end{displaymath}
  for every $x\in E_{(p)}$.
  It follows that for every $x_1,\dots,x_n$ we have
  \begin{multline*}
    T\widetilde M(x_1\otimes\dots\otimes x_n+\Io)
   =TM(x_1\otimes\dots\otimes x_n)\\
   =T\mu(x_1,\dots,x_n)=(q\otimes)(x_1,\dots, x_n)
   =x_1\otimes\dots\otimes x_n+\Io,
  \end{multline*}
  so that $T\widetilde M$ is the identity on the quotient of algebraic tensor
  product
  $\bigl(E_{(p_1)}\otimes\dots\otimes E_{(p_n)}\bigr)/\Io$.
  We claim that it is still the identity map on
  \begin{math}
    \bigl(E_{(p_1)}\bar\otimes\dots\bar\otimes E_{(p_n)}\bigr)/\Io;
  \end{math}
  this would imply that $\widetilde M$ is one-to-one and complete the
  proof.

  Suppose that $u\in E_{(p_1)}\bar\otimes\dots\bar\otimes E_{(p_n)}$.
  By \cite[Theorem~4.2(i)]{Fremlin:72}, there exist
  $w:=z_1\otimes\dots\otimes z_n$ in $E_{(p_1)}\otimes\dots\otimes
  E_{(p_n)}$ with $z_1,\dots,z_n\ge 0$
  such that for every positive real $\delta$ there
  exists $v\in E_{(p_1)}\otimes\dots\otimes E_{(p_n)}$ with
  \begin{math}
    \abs{u-v}\le\delta w.
  \end{math}
  Since the quotient map $q$ is a lattice homomorphism, we get
  \begin{math}
    \abs{\tilde u-\tilde v}\le\delta \tilde w.
  \end{math}
  Since $T$ and $\widetilde M$ are lattice homomorphisms and, by the
  preceding paragraph,
  $T\widetilde M$ preserves $\tilde v$ and $\tilde z$, we get
  \begin{math}
    \abs{T\widetilde M\tilde u-\tilde v}\le\delta \tilde w.
  \end{math}
  It follows that
  \begin{math}
    \abs{T\widetilde M\tilde u-\tilde u}
    \le\abs{T\widetilde M\tilde u-\tilde v}+\abs{\tilde u-\tilde v}
    \le 2\delta\tilde w.
  \end{math}
  Since $\delta$ is arbitrary, it follows by the Archimedean property
  that $T\widetilde M\tilde u=\tilde u$.
\end{proof}

\begin{remark}
  Note that the lattice isomorphism constructed in
  the proof of the theorem sends
  \begin{math}
    x_1\otimes\dots\otimes x_n+\Io
  \end{math}
  into $x_1^{p_1/p}\cdots x_n^{p_n/p}$, while
  its inverse sends $x$ to $x\otimes\abs{x}\otimes\dots\otimes\abs{x}+\Io$
  for every $x$.
\end{remark}

\section{Products of Banach lattices}
\label{BL}

Now suppose that $E$ is a Banach lattice and $p$ is a positive real
number. For each $x\in E_{(p)}$ we define
\begin{displaymath}
  \norm{x}_{(p)}=
  \inf\Bigl\{\sum_{i=1}^k\norm{v_i}^p\mid
     \abs{x}\le v_1\oplus\dots\oplus v_k,\ v_1,\dots,v_k\ge 0\Bigr\}.
\end{displaymath}
It is easy to see that this is a lattice seminorm on $E_{(p)}$.  We
will write $x\sim y$ if the difference $x\ominus y$ is in the kernel
of this seminorm. For $x\in E$ we will write $[x]$ for the equivalence
class of $x$. Let $E_{[p]}$ be the completion of
$E_{(p)}/\ker\norm{\cdot}_{(p)}$. Then $E_{[p]}$ is a Banach lattice.

Let's compare this definition with the concepts of the
$p$-convexification and the $p$-concavification of a Banach lattice,
e.g., in~\cite{Lindenstrauss:79}. If $p>1$ and $E$ is $p$-convex then
$\norm{\cdot}_{(p)}$ is a complete norm on $E_{(p)}$, hence
$E_{[p]}=E_{(p)}$, and this is exactly the $p$-concavification of $E$
in the sense of~\cite{Lindenstrauss:79}. In particular, if $E$ is
$p$-convex with constant 1 then $\norm{\cdot}^p$ is already a norm, so
that, by the triangle inequality, we have
$\norm{\cdot}_{(p)}=\norm{\cdot}^p$. On the other hand, let
$0<p<1$. Put $q=\frac{1}{p}>1$. As in the construction of the
$q$-convexification $E^{(q)}$ of $E$ in~\cite{Lindenstrauss:79}, we
see that $\norm{\cdot}^p$ is already a norm on $E_{(p)}$, so that
$\norm{\cdot}_{(p)}=\norm{\cdot}^p$. In this case,
$E_{[p]}=E_{(p)}=E^{(q)}$. Thus, the $E_{[p]}$ notation allows us to
unify convexifications and concavifications, and it does not make any
assumptions on $E$ besides being a Banach (or even a normed) lattice.

If $E_1,\dots,E_n$ are Banach lattices, we write $E_1\ftp\dots\ftp
E_n$ for the Fremlin projective tensor of $E_1,\dots,E_n$ as
in~\cite{Fremlin:74}; we denote the norm on this product by
$\norm{\cdot}_\api$. We will make use of the following universal
property of this tensor product, which is essentially
Theorem~1E(iii,iv) in~\cite{Fremlin:74} (see also Part~(d) of
Section~2 in ~\cite{Schep:84}).

\begin{lemma}\label{univ-prop-BL}
  Suppose $E_1,\ldots,E_n$ and $F$ are Banach lattices. There is
  an one-to-one norm preserving correspondence between continuous
  positive $n$-linear maps $\varphi:E_1\times\ldots\times E_n\to F$
  and positive operators $\varphi^{\otimes}\colon E_1\ftp\dots\ftp
  E_n\to F$ such that
  $\varphi(x_1,\dots,x_n)=\varphi^{\otimes}(x_1\otimes\dots\otimes
  x_n)$.  Moreover, $\varphi^{\otimes}$ is a lattice homomorphism if
  and only if $\varphi$ is a lattice $n$-morphism.
\end{lemma}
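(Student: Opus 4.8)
The plan is to reduce to the bilinear case --- which is precisely the positive-cone part of Theorem~1E(iii,iv) of~\cite{Fremlin:74} --- and to obtain the general case by induction on $n$, using the associativity of the Fremlin projective tensor product. For $n=1$ the correspondence is the identity, and for $n=2$ it is~\cite[Theorem~1E]{Fremlin:74}: continuous positive bilinear maps $E_1\times E_2\to F$ correspond bijectively and isometrically to positive operators $E_1\ftp E_2\to F$, with lattice bimorphisms matching lattice homomorphisms. So I would assume $n>2$ and that the lemma holds for $n-1$.

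For the inductive step, given a continuous positive $n$-linear map $\varphi$, I would freeze the last variable: for $x_n\in(E_n)_+$ the map $\varphi_{x_n}\colon(x_1,\dots,x_{n-1})\mapsto\varphi(x_1,\dots,x_{n-1},x_n)$ is continuous positive $(n-1)$-linear with $\norm{\varphi_{x_n}}\le\norm{\varphi}\norm{x_n}$, so by the induction hypothesis it yields a positive operator $\Phi_{x_n}$ on $E_1\ftp\dots\ftp E_{n-1}$ of the same norm. Since the span of the elementary tensors is dense, $x_n\mapsto\Phi_{x_n}$ is additive and positively homogeneous on $(E_n)_+$, hence extends to a linear map on $E_n$, producing a bilinear map $\Phi\colon(E_1\ftp\dots\ftp E_{n-1})\times E_n\to F$ with $\Phi(x_1\otimes\dots\otimes x_{n-1},x_n)=\varphi(x_1,\dots,x_n)$. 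It is positive, and since $\bigabs{\Phi(w,z)}\le\Phi\bigl(\abs w,\abs z\bigr)$ for a positive bilinear map, its norm is computed over the positive cone, whence $\norm{\Phi}\le\norm{\varphi}$. Applying the $n=2$ case to $\Phi$ and composing with the lattice isometry $E_1\ftp\dots\ftp E_n\cong(E_1\ftp\dots\ftp E_{n-1})\ftp E_n$ produces the desired positive operator $\varphi^{\otimes}$ with $\norm{\varphi^{\otimes}}=\norm{\Phi}\le\norm{\varphi}$; the reverse inequality is immediate from $\norm{x_1\otimes\dots\otimes x_n}_\api\le\norm{x_1}\cdots\norm{x_n}$. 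Uniqueness of $\varphi^{\otimes}$, and the fact that $\Phi\mapsto\bigl[(x_1,\dots,x_n)\mapsto\Phi(x_1\otimes\dots\otimes x_n)\bigr]$ is the inverse correspondence, follow from density of the span of the elementary tensors. For the lattice statement: if $\varphi$ is a lattice $n$-morphism then each $\varphi_{x_n}$ with $x_n\ge0$ is a lattice $(n-1)$-morphism, so each $\Phi_{x_n}$ is a lattice homomorphism, from which $\Phi$ turns out to be a lattice bimorphism and hence $\varphi^{\otimes}$ a lattice homomorphism; the converse follows by evaluating on elementary tensors and using $\abs{x_1\otimes\dots\otimes x_n}=\abs{x_1}\otimes\dots\otimes\abs{x_n}$.

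The step I expect to cost the most work is showing that $\Phi$ is a lattice bimorphism when $\varphi$ is a lattice $n$-morphism: this reduces to checking that, for each fixed positive $w$, the operator $x_n\mapsto\Phi_{x_n}(w)$ is a lattice homomorphism $E_n\to F$, which I would handle by approximating $w$ in norm by positive elements of the algebraic tensor product, using that $\varphi(a_1,\dots,a_{n-1},\cdot)$ is a lattice homomorphism for positive $a_i$, and invoking that disjointness is preserved under norm limits. A secondary point is the need for associativity of $\ftp$ as a lattice isometry; if this is not stated outright in~\cite{Fremlin:74}, it follows from the universal property via the bilinear case.

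An alternative that avoids the iteration is to apply the $n$-linear universal property of the vector-lattice Fremlin tensor product (\cite[Theorem~4.2]{Fremlin:72}) to get $\varphi^{\otimes}$ on $E_1\bar\otimes\dots\bar\otimes E_n$ at once --- positive, and a lattice homomorphism exactly when $\varphi$ is a lattice $n$-morphism --- then use the description of $\norm{\cdot}_\api$ to see that $\norm{\varphi^{\otimes}u}\le\norm{\varphi}\norm{u}_\api$ (first for $u\ge0$, then in general via $\abs{\varphi^{\otimes}u}\le\varphi^{\otimes}\abs u$), and finally extend $\varphi^{\otimes}$ by continuity to the completion $E_1\ftp\dots\ftp E_n$, where positivity, the norm bound, and the lattice-homomorphism property persist because the positive cone is norm-closed and the lattice operations are norm-continuous. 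The reverse norm inequality and the bijectivity of the correspondence are routine in either approach.
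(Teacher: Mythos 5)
The paper gives no proof of this lemma at all: it is stated as a citation, ``essentially Theorem 1E(iii,iv) in [Fremlin:74]'' together with Schep's paper, which handle the bilinear and the $n$-linear universal property respectively. Your proposal therefore cannot be ``the same as the paper's proof''; it is a genuine reconstruction, and the route you choose --- induction on $n$ via associativity of $\ftp$, bootstrapping from Fremlin's $n=2$ case --- is a reasonable and correct way to recover the $n$-linear statement from the bilinear one. The norm bookkeeping is fine: $\norm{\Phi_{x_n}}=\norm{\varphi_{x_n}}\le\norm{\varphi}\norm{x_n}$ for $x_n\ge0$ gives $\norm{\Phi}\le\norm{\varphi}$ once you observe that a positive bilinear map attains its norm on the positive cone, and the reverse inequality is immediate from the cross-norm property. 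Positivity of the linear extension $x_n\mapsto\Phi_{x_n}$ and automatic continuity of positive maps between Banach lattices are used correctly.

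One step is underspecified, and you rightly flagged it as the hard part. To conclude that $z\mapsto\Phi(w,z)$ is a lattice homomorphism for fixed $w\ge0$, you approximate $w$ in norm by $\sum_i a_1^{(i)}\otimes\cdots\otimes a_{n-1}^{(i)}$ with $a_j^{(i)}\ge0$ (which is legitimate, since the positive cone of $\ftp$ is by definition the closed cone generated by positive elementary tensors). But then $\Phi(w,z^+)\approx\sum_i\varphi(a^{(i)},z^+)$ and $\Phi(w,z^-)\approx\sum_j\varphi(a^{(j)},z^-)$, and to get disjointness of these two sums you need the \emph{cross-term} disjointness $\varphi(a^{(i)},z^+)\perp\varphi(a^{(j)},z^-)$ for $i\neq j$, not merely the diagonal case $i=j$ that follows directly from ``$\varphi(a^{(i)},\cdot)$ is a lattice homomorphism''. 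This does hold, via
\begin{displaymath}
\varphi\bigl(a^{(i)},z^+\bigr)\wedge\varphi\bigl(a^{(j)},z^-\bigr)
\le\varphi\bigl(a^{(i)}\vee a^{(j)},z^+\bigr)\wedge\varphi\bigl(a^{(i)}\vee a^{(j)},z^-\bigr)
=\varphi\bigl(a^{(i)}\vee a^{(j)},z^+\wedge z^-\bigr)=0,
\end{displaymath}
using monotonicity in the first $n-1$ slots and the lattice homomorphism property of $\varphi(a^{(i)}\vee a^{(j)},\cdot)$; but your sketch, which only invokes the diagonal case and continuity of disjointness, omits this and as written would not close. With this supplement the inductive argument is complete. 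The second, non-iterative alternative you mention is closer in spirit to what Schep's cited paper does, but it requires a positivity-of-extension statement for the algebraic tensor product and is not quite a straight consequence of Fremlin's 1972 Theorem 4.2 (which is stated for lattice $n$-morphisms), so the inductive route is the safer one of the two.
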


\begin{lemma}\label{mu-bdd}
  Let $E$ be a Banach lattice and $\mu$ be as in Corollary~\ref{wgm}. Then
  $\norm{\mu}\le 1$.
\end{lemma}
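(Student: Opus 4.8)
The plan is to prove the multiplicative estimate $\norm{\mu(x_1,\dots,x_n)}_{(p)}\le\norm{x_1}_{(p_1)}\cdots\norm{x_n}_{(p_n)}$ for all $x_1,\dots,x_n\in E$; by the $n$-linearity of $\mu$ and the positive homogeneity of the seminorms $\norm{\cdot}_{(p_i)}$ this is equivalent to $\norm{\mu}\le1$. Fix $x_1,\dots,x_n$ and $\varepsilon>0$. Since $v_1\oplus\dots\oplus v_k=\bigl(v_1^{p_i}+\dots+v_k^{p_i}\bigr)^{1/p_i}$ in $E_{(p_i)}$ for $v_1,\dots,v_k\ge0$, the definition of $\norm{\cdot}_{(p_i)}$ lets me choose, for each $i$, elements $v^{(i)}_1,\dots,v^{(i)}_{k_i}\ge0$ in $E$ with
\[
  \abs{x_i}^{p_i}\le\sum_{j=1}^{k_i}\bigl(v^{(i)}_j\bigr)^{p_i}
  \qquad\text{and}\qquad
  \sum_{j=1}^{k_i}\norm{v^{(i)}_j}^{p_i}<\norm{x_i}_{(p_i)}+\varepsilon .
\]
All function-calculus products and powers below take place in the principal ideal generated by $\abs{x_1}\vee\dots\vee\abs{x_n}$ and these finitely many $v^{(i)}_j$, which can be identified with some $C(K)$ so that these products become pointwise products (see, e.g., Theorem~5 in~\cite{Buskes:01}); in particular they are associative and distribute over finite sums.

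Next I would multiply the $n$ inequalities above together. Writing $x=\mu(x_1,\dots,x_n)=x_1^{p_1/p}\cdots x_n^{p_n/p}$ and using $\abs{x}^p=\abs{x_1}^{p_1}\cdots\abs{x_n}^{p_n}$, expanding the product of sums gives
\[
  \abs{x}^p\le\prod_{i=1}^n\Bigl(\sum_{j=1}^{k_i}\bigl(v^{(i)}_j\bigr)^{p_i}\Bigr)
  =\sum_{\mathbf j}\;\prod_{i=1}^n\bigl(v^{(i)}_{j_i}\bigr)^{p_i}
  =\sum_{\mathbf j}w_{\mathbf j}^{\,p},
\]
where $\mathbf j=(j_1,\dots,j_n)$ ranges over $\{1,\dots,k_1\}\times\dots\times\{1,\dots,k_n\}$ and $w_{\mathbf j}:=\prod_{i=1}^n\bigl(v^{(i)}_{j_i}\bigr)^{p_i/p}\ge0$. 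Taking $p$-th roots, $\abs{x}\le\bigl(\sum_{\mathbf j}w_{\mathbf j}^{\,p}\bigr)^{1/p}$, and as the right-hand side is a finite $\oplus$-sum of the $w_{\mathbf j}$ in $E_{(p)}$, the definition of $\norm{\cdot}_{(p)}$ yields $\norm{x}_{(p)}\le\sum_{\mathbf j}\norm{w_{\mathbf j}}^{\,p}$.

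It remains to bound each $\norm{w_{\mathbf j}}^{\,p}$, and this is the only step carrying genuine content. I would use the weighted arithmetic--geometric mean inequality in the vector lattice, $a_1^{p_1/p}\cdots a_n^{p_n/p}\le\tfrac{p_1}{p}a_1+\dots+\tfrac{p_n}{p}a_n$ for $a_1,\dots,a_n\ge0$, which holds because the corresponding inequality holds pointwise for nonnegative reals. If some $v^{(i)}_{j_i}=0$ then $w_{\mathbf j}=0$; otherwise put $c_i=\norm{v^{(i)}_{j_i}}>0$ and apply this inequality with $a_i=c_i^{-1}v^{(i)}_{j_i}$, which by homogeneity of the function calculus gives
\[
  w_{\mathbf j}=\Bigl(\prod_{i=1}^nc_i^{p_i/p}\Bigr)\prod_{i=1}^n\bigl(c_i^{-1}v^{(i)}_{j_i}\bigr)^{p_i/p}
  \le\Bigl(\prod_{i=1}^nc_i^{p_i/p}\Bigr)\sum_{i=1}^n\frac{p_i}{p}\,c_i^{-1}v^{(i)}_{j_i};
\]
taking norms, $\norm{w_{\mathbf j}}\le\prod_{i=1}^nc_i^{p_i/p}$, that is, $\norm{w_{\mathbf j}}^{\,p}\le\prod_{i=1}^n\norm{v^{(i)}_{j_i}}^{p_i}$. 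Summing over $\mathbf j$ and refactoring the product,
\[
  \norm{x}_{(p)}\le\sum_{\mathbf j}\norm{w_{\mathbf j}}^{\,p}
  \le\prod_{i=1}^n\Bigl(\sum_{j=1}^{k_i}\norm{v^{(i)}_j}^{p_i}\Bigr)
  <\prod_{i=1}^n\bigl(\norm{x_i}_{(p_i)}+\varepsilon\bigr),
\]
and letting $\varepsilon\to0$ completes the argument. The main obstacle is not any single hard estimate but isolating the right auxiliary fact: the vector-lattice AM--GM inequality together with the rescaling that upgrades it to the submultiplicative norm bound $\norm{a_1^{p_1/p}\cdots a_n^{p_n/p}}\le\norm{a_1}^{p_1/p}\cdots\norm{a_n}^{p_n/p}$; the distributivity of the products, the passage to $p$-th roots, and the reduction to $C(K)$ are all routine bookkeeping.
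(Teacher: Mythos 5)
Correct, and essentially the paper's argument: decompose each $\abs{x_i}$ into a finite $\oplus$-sum, distribute to get an $\oplus$-dominating sum for $\abs{\mu(x_1,\dots,x_n)}$, apply the submultiplicative norm bound for weighted geometric means termwise, and take the infimum. The only real difference is that you prove that norm bound $\bignorm{a_1^{p_1/p}\cdots a_n^{p_n/p}}\le\norm{a_1}^{p_1/p}\cdots\norm{a_n}^{p_n/p}$ from scratch via the vector-lattice AM--GM inequality plus a rescaling, whereas the paper simply cites Proposition~1.d.2(i) of Lindenstrauss--Tzafriri for it.
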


\begin{proof}
  By Proposition~1.d.2(i) of~\cite{Lindenstrauss:79}, we have
  \begin{equation}\label{mu-norm-est}
    \bignorm{\mu(x_1,\dots,x_n)}
    \le\norm{x_1}^{\frac{p_1}{p}}\cdots\norm{x_n}^{\frac{p_n}{p}}
  \end{equation}
  for every $x_1,\dots,x_n$. Fix $x_1,\dots,x_n\in E$. As in the definition of
  $\norm{\cdot}_{(p)}$, suppose that
  \begin{equation}\label{vij}
    \abs{x_1}\le v^{(1)}_1\oplus\dots\oplus v^{(1)}_{k_1},
    \quad\dots,\quad
    \abs{x_n}\le v^{(n)}_1\oplus\dots\oplus v^{(n)}_{k_n}
  \end{equation}
  for some positive $v^{(m)}_{i}$'s. Since $\mu$ is a lattice
  $n$-morphism, we have
  \begin{displaymath}
    \bigabs{\mu(x_1,\dots,x_n)}
    =\mu\bigl(\abs{x_1},\dots,\abs{x_n}\bigr)
    \le\mu\Bigl(\bigoplus_{i_1=1}^{k_1}v^{(1)}_{i_1},\dots,
       \bigoplus_{i_n=1}^{k_n}v^{(n)}_{i_n}\Bigr)
    =\bigoplus_{i_1,\dots,i_n}\mu\bigl(v^{(1)}_{i_1},\dots,v^{(n)}_{i_n}\bigr)
  \end{displaymath}
  where each $i_m$ runs from $1$ to $k_m$. The
  definition of $\norm{\cdot}_{(p)}$ yields
  \begin{displaymath}
    \bignorm{\mu(x_1,\dots,x_n)}_{(p)}
    \le\sum_{i_1,\dots,i_n}\bignorm{\mu(v^{(1)}_{i_1},\dots,v^{(n)}_{i_n})}^p.
  \end{displaymath}
  It follows from~\eqref{mu-norm-est} that
 \begin{displaymath}
    \bignorm{\mu(x_1,\dots,x_n)}_{(p)}
    \le\sum_{i_1,\dots,i_n}\norm{v^{(1)}_{i_1}}^{p_1}\cdots\norm{v^{(n)}_{i_n}}^{p_n}
    =\Bigl(\sum_{i_1=1}^{k_1}\norm{v^{(1)}_{i_1}}^{p_1}\Bigr)\cdots
      \Bigl(\sum_{i_n=1}^{k_n}\norm{v^{(n)}_{i_n}}^{p_n}\Bigr).
  \end{displaymath}
  Taking infimum over all positive $v^{(m)}_{i}$'s in~\eqref{vij}, we
  get
  \begin{displaymath}
    \bignorm{\mu(x_1,\dots,x_n)}_{(p)}
    \le\norm{x_1}_{(p_1)}\cdots\norm{x_n}_{(p_n)}.
  \end{displaymath}
\end{proof}

\begin{theorem}\label{main}
  Let $E$ be a Banach lattice and $p_1,\dots,p_n$ positive reals. Put
  $F=E_{[p_1]}\ftp\dots\ftp E_{[p_n]}$. Let $\Ioc$ be the norm closed ideal in $F$
  generated by elementary tensors $[x_1]\otimes\dots\otimes[x_n]$ with
  $\bigwedge_{i=1}^n\abs{x_i}=0$. Then $F/\Ioc$ is lattice isometric
  to $E_{[p]}$ where $p=p_1+\dots+p_n$.
\end{theorem}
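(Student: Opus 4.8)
The plan is to mimic the vector-lattice argument of the previous section, but carrying norms along at each step. First I would introduce the analogue of $\mu$ at the level of completions: let $\bar\mu\colon E_{[p_1]}\times\dots\times E_{[p_n]}\to E_{[p]}$ be the map induced by $\mu(x_1,\dots,x_n)=x_1^{p_1/p}\cdots x_n^{p_n/p}$, first defined on the dense subspaces $E_{(p_i)}/\ker\norm{\cdot}_{(p_i)}$ via Lemma~\ref{mu-bdd} (which gives $\norm{\mu}\le1$, so the map is well defined on equivalence classes and extends continuously) and then to the completions by density. It is still an orthosymmetric lattice $n$-morphism, and $\bar\mu([x],\abs{[x]},\dots,\abs{[x]})=[x]$. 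By Lemma~\ref{univ-prop-BL} applied to $\bar\mu$ there is a lattice homomorphism $M\colon F\to E_{[p]}$ with $\norm M\le1$, vanishing on elementary tensors with $\bigwedge\abs{x_i}=0$ and hence (being a lattice homomorphism, and $E_{[p]}$ being Archimedean so that its norm-closed ideals absorb norm limits) on $\Ioc$. Thus $M$ factors through a lattice homomorphism $\widetilde M\colon F/\Ioc\to E_{[p]}$ with $\norm{\widetilde M}\le1$, and $\widetilde M$ is onto because $[x]=M([x]\otimes\abs{[x]}\otimes\dots\otimes\abs{[x]})$.

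Next I would build a contraction in the other direction. The composition $q\circ\otimes\colon E_{[p_1]}\times\dots\times E_{[p_n]}\to F/\Ioc$ is a positive $n$-linear lattice $n$-morphism, and I claim it is bounded with norm $\le1$ as a map out of the product of the $E_{[p_i]}$; this is the Banach-lattice content that has no vector-lattice counterpart, and it is where I expect the real work to lie. Concretely, one must show
\begin{displaymath}
  \bignorm{[x_1]\otimes\dots\otimes[x_n]+\Ioc}_{F/\Ioc}
  \le \norm{[x_1]}_{[p_1]}\cdots\norm{[x_n]}_{[p_n]},
\end{displaymath}
which should follow by the same bookkeeping as in Lemma~\ref{mu-bdd}: cover each $\abs{x_i}$ by a finite sum $v^{(i)}_1\oplus\dots\oplus v^{(i)}_{k_i}$, distribute the tensor product over these $\oplus$-sums inside the Fremlin product, estimate $\norm{v^{(1)}_{i_1}\otimes\dots\otimes v^{(n)}_{i_n}}_\api\le\norm{v^{(1)}_{i_1}}_{[p_1]}\cdots$ using the cross-norm property, and take infima. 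Then Lemma~\ref{univ-prop-BL} (or rather its norm-preserving correspondence, together with the fact that $q\circ\otimes$ kills the generators of $\Ioc$ and one factors through $E_{[p]}$ via Corollary~\ref{wgm}) yields a lattice homomorphism $T\colon E_{[p]}\to F/\Ioc$ with $\norm T\le1$ and $T[x]=[x_1]\otimes\dots\otimes[x_n]+\Ioc$ whenever $[x]=\bar\mu([x_1],\dots,[x_n])$, in particular $T[x]=[x]\otimes\abs{[x]}\otimes\dots\otimes\abs{[x]}+\Ioc$.

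Finally I would check that $T$ and $\widetilde M$ are mutual inverses. On algebraic tensors the identity $T\widetilde M(u+\Ioc)=u+\Ioc$ is the same computation as in the vector-lattice theorem, and to pass to all of $F$ I would use the density of the algebraic tensor product in the Fremlin projective tensor product: for $u\in F$ and $\varepsilon>0$ pick an algebraic tensor $v$ with $\norm{u-v}_\api<\varepsilon$, so that $\norm{\tilde u-\tilde v}_{F/\Ioc}\le\varepsilon$, and since $\norm{T\widetilde M}\le1$ and $T\widetilde M\tilde v=\tilde v$,
\begin{displaymath}
  \norm{T\widetilde M\tilde u-\tilde u}
  \le\norm{T\widetilde M(\tilde u-\tilde v)}+\norm{\tilde v-\tilde u}\le2\varepsilon,
\end{displaymath}
hence $T\widetilde M=\mathrm{id}$; symmetrically $\widetilde M T[x]=[x]$ is immediate from $\widetilde M T[x]=M([x]\otimes\abs{[x]}\otimes\dots\otimes\abs{[x]})=\bar\mu([x],\abs{[x]},\dots,\abs{[x]})=[x]$. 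So $\widetilde M$ is a lattice isomorphism with $\norm{\widetilde M}\le1$ and $\norm{\widetilde M^{-1}}=\norm T\le1$, i.e.\ a lattice isometry. The main obstacle, as noted, is establishing that $q\circ\otimes$ (equivalently $T$) is a contraction with respect to the $\norm{\cdot}_{[p]}$ norm; the rest is a norm-sensitive rerun of Section~\ref{VL}.
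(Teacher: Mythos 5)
Your overall architecture matches the paper's proof: build a contraction $\widetilde M\colon F/\Ioc\to E_{[p]}$ from $\mu$ via the universal property, build a contraction $T$ (or $\widetilde T$) the other way, and check they are mutual inverses on the dense subspace coming from algebraic tensors. Where you go wrong is in the step you yourself flag as ``where the real work lies.'' The inequality you single out,
\begin{displaymath}
  \bignorm{[x_1]\otimes\dots\otimes[x_n]+\Ioc}_{F/\Ioc}
  \le \norm{[x_1]}_{[p_1]}\cdots\norm{[x_n]}_{[p_n]},
\end{displaymath}
is \emph{not} the crux: it is immediate, since $\norm{\cdot}_\api$ is a cross-norm and the quotient map is a contraction; no Lemma~\ref{mu-bdd}-style bookkeeping is needed for it. More importantly, it does not yield $\norm T\le 1$. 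Specialising to $x_1=x$, $x_i=\abs{x}$ for $i\ge 2$ gives only $\norm{Tx}\le\norm{x}_{(p_1)}\cdots\norm{x}_{(p_n)}$, and in general $\norm{x}_{(p_1)}\cdots\norm{x}_{(p_n)}\not\le\norm{x}_{(p)}$. For example with $E=\ell^1_2$, $x=(1,1)$, $n=2$, $p_1=p_2=1$: one has $\norm{x}_{(1)}^2=4$, but $\abs{x}\le(1,0)\oplus(0,1)$ in $E_{(2)}$ gives $\norm{x}_{(2)}\le 2$. So the cross-norm bound alone leaves you a factor short.

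The estimate one actually needs is $\norm{Tx}\le\norm{x}_{(p)}$ directly, and it is a genuinely \emph{single-variable} argument that exploits the fact that $T\colon E_{(p)}\to F/\Ioc$ is a lattice homomorphism for the $\oplus$-addition of $E_{(p)}$: from $\abs{x}\le v_1\oplus\dots\oplus v_m$ with $v_i\ge 0$ one gets $\abs{Tx}=T\abs{x}\le Tv_1+\dots+Tv_m$ (ordinary addition in $F/\Ioc$), hence $\norm{Tx}\le\sum_i\norm{Tv_i}\le\sum_i\norm{v_i}^p$ using the easy bound $\norm{Tv}\le\norm{v}^p$, and then take the infimum defining $\norm{x}_{(p)}$. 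Your proposed ``cover each $\abs{x_i}$'' decomposition is the argument for Lemma~\ref{mu-bdd} (the norm of $\mu$), not for the norm of $T$; decomposing the $x_i$'s separately cannot see the $\oplus_p$-decompositions that define $\norm{\cdot}_{(p)}$. Also note that your phrase ``$E_{[p]}$ being Archimedean so that its norm-closed ideals absorb norm limits'' is not what is needed to show $M$ vanishes on $\Ioc$: one simply observes that a lattice homomorphism vanishing on a generating set vanishes on the ideal it generates, and by norm-continuity on its closure. Aside from the missing norm estimate, the surrounding structure (construction of $M$, the density argument for $T\widetilde M=\mathrm{id}$, and the conclusion of isometry from $\norm{\widetilde M}\le 1$ and $\norm{\widetilde T}\le 1$) agrees with the paper.
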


\begin{proof}
  Let $\mu$ be as in Corollary~\ref{wgm}. Fix $x_1,\dots,x_n$ in
  $E$. Take any $x_1',\dots,x_n'$ in $E$ such that
  $\norm{x_i'-x_i}_{(p_i)}=0$ as $i=1,\dots,n$. Then it follows from
  Lemma~\ref{mu-bdd} that
  \begin{displaymath}
    \bignorm{\mu(x_1,x_2,\dots,x_n)\ominus\mu(x_1',x_2,\dots,x_n)}_{(p)}
    =\bignorm{\mu(x_1\ominus x_1',x_2,\dots,x_n)}_{(p)}
    =0,
  \end{displaymath}
  so that $\mu(x_1,x_2,\dots,x_n)\sim\mu(x_1',x_2,\dots,x_n)$ in
  $E_{(p)}$.  Iterating this process, we see that
  \begin{math}
    \mu(x_1',\dots,x_n')\sim\mu(x_1,\dots,x_n).
  \end{math}
  It follows that $\mu$ induces a map
  \begin{displaymath}
    \tilde\mu\colon
      \bigl(E_{(p_1)}/\ker\norm{\cdot}_{(p_1)}\bigr)\times\dots\times
      \bigl(E_{(p_n)}/\ker\norm{\cdot}_{(p_n)}\bigr)\to
      E_{(p)}/\ker\norm{\cdot}_{(p)}
  \end{displaymath}
  via
  \begin{math}
    \tilde\mu\bigl([x_1],\dots,[x_n]\bigr)=\bigl[\mu(x_1,\dots,x_n)\bigr].
  \end{math}
  Lemma~\ref{mu-bdd} implies that $\norm{\tilde\mu}\le 1$, so
  that it extends by continuity to a map
  \begin{math}
    \varphi\colon E_{[p_1]}\times\dots\times E_{[p_n]}\to E_{[p]}.
  \end{math}
  It is easy to see that $\varphi$ is still an orthosymmetric lattice
  $n$-morphism and $\norm{\varphi}\le 1$. As in
  Lemma~\ref{univ-prop-BL}, $\varphi$ gives rise to a lattice
  homomorphism
  \begin{math}
    \varphi^\otimes\colon F\to E_{[p]}
  \end{math}
  such that $\norm{\varphi^\otimes}\le 1$ and
  \begin{equation}\label{phi-otimes}
    \varphi^\otimes\bigl([x_1]\otimes\dots\otimes[x_n]\bigr)=
    \varphi\bigl([x_1],\dots,[x_n]\bigr)=
    \bigl[\mu(x_1,\dots,x_n)\bigr].
  \end{equation}
  The latter implies that $\varphi^\otimes$ vanishes on $\Ioc$. This,
  in turn, implies that $\varphi^\otimes$ induces a map
  \begin{math}
    \widetilde{\varphi^\otimes}\colon F/\Ioc\to E_{[p]},
  \end{math}
  which is again a lattice homomorphism and
  $\bignorm{\widetilde{\varphi^\otimes}}\le 1$.

  Consider the map $\psi\colon E_{(p_1)}\times\dots\times
  E_{(p_n)}\to F/\Ioc$ defined by
  $\psi(x_1,\dots,x_n)=[x_1]\otimes\dots\otimes[x_n]+\Ioc$. It can be
  easily verified that $\psi$ is an orthosymmetric lattice
  $n$-morphism. It follows from Corollary~\ref{wgm} that there exists a
  lattice homomorphism $T\colon E_{(p)}\to F/\Ioc$ such that
  $\psi=T\mu$ and
  \begin{displaymath}
    Tx=\psi\bigl(x,\abs{x},\dots,\abs{x}\bigr)
      =[x]\otimes\bigl[\abs{x}\bigr]\otimes\dots\otimes
      \bigl[\abs{x}\bigr]+\Ioc
  \end{displaymath}
  for every $x\in E_{(p)}$.

  We claim that
  $\norm{Tx}\le\norm{x}_{(p)}$. Note first that as $\norm{\cdot}_\api$
  is a cross-norm, we have
  \begin{equation}\label{norm-Tx}
    \norm{Tx}
    \le\bignorm{[x]}_{E_{[p_1]}}\cdots\bignorm{[x]}_{E_{[p_n]}}
    \le\norm{x}_{(p_1)}\cdots\norm{x}_{(p_n)}
    \le\norm{x}^{p_1}\cdots\norm{x}^{p_n}
    =\norm{x}^p.
  \end{equation}
  Suppose that
  $\abs{x}\le v_1\oplus\dots\oplus v_m$ for some positive $v_1,\dots,v_m$, as in
  the definition of $\norm{\cdot}_{(p)}$. Then
  \begin{math}
    \abs{Tx}=T\abs{x}\le\sum_{i=1}^mTv_i,
  \end{math}
  so that
  \begin{math}
    \norm{Tx}\le\sum_{i=1}^m\norm{Tv_i}\le\sum_{i=1}^m\norm{v_i}^p
  \end{math}
  by~\eqref{norm-Tx}. It follows that
  $\norm{Tx}\le\norm{x}_{(p)}$.

  Therefore, $T$ induces an operator
  from $E_{(p)}/\ker\norm{\cdot}_{(p)}$ to $\Ioc$ and, furthermore, an
  operator from $E_{[p]}$ to $F/\Ioc$, which we will denote
  $\widetilde{T}$, such that $\widetilde{T}[x]=Tx$ for every $x\in
  E_{(p)}$. Clearly, $\widetilde{T}$ is still a lattice homomorphism
  and $\norm{\widetilde{T}}\le 1$.
  We will show that $\widetilde{T}$ is the inverse of
  $\widetilde{\varphi^\otimes}$. This will complete the proof because
  this would imply that $\widetilde{\varphi^\otimes}$ is a surjective lattice
  isomorphism; it would follow from
  $\norm{\widetilde{\varphi^\otimes}}\le 1$ and
  $\norm{\widetilde{T}}\le 1$ that $\widetilde{\varphi^\otimes}$ is an
  isometry.

  Take any $x\in E$ and consider the corresponding class $[x]$ in
  $E_{[p]}$. Using \eqref{phi-otimes}, we get
  \begin{displaymath}
    \widetilde{\varphi^\otimes}\widetilde{T}[x]
    =\widetilde{\varphi^\otimes}Tx
    =\varphi^\otimes\bigl([x]\otimes\bigl[\abs{x}\bigr]\otimes\dots\otimes\bigl[\abs{x}\bigr]\bigr)
    =\bigl[\mu\bigl(x,\abs{x},\dots,\abs{x}\bigr)\bigr]
    =[x].
  \end{displaymath}
  Therefore, $\widetilde{\varphi^\otimes}\widetilde{T}$ is the
  identity on $E_{[p]}$. Conversely, for any $x_1,\dots,x_n$ in $E$ it
  follows by~\eqref{phi-otimes} that
  \begin{multline*}
    \widetilde{T}\widetilde{\varphi^\otimes}
      \bigl([x_1]\otimes\dots\otimes[x_n]+\Ioc\bigr)
    =\widetilde{T}\bigl[\mu(x_1,\dots,x_n)\bigr]
    =T\mu(x_1,\dots,x_n)\\
    =\psi(x_1,\dots,x_n)
    =[x_1]\otimes\dots\otimes[x_n]+\Ioc.
  \end{multline*}
  Therefore, $\widetilde{T}\widetilde{\varphi^\otimes}$ is the
  identity on the linear subspace of $F/\Ioc$ that corresponds to the
  algebraic tensor product, i.e., on
  $q\bigl(E_{[p_1]}\otimes\dots\otimes E_{[p_n]}\bigr)$, where $q$ is
  the canonical quotient map from $F$ to $F/\Ioc$. Since
  $E_{[p_1]}\otimes\dots\otimes E_{[p_n]}$ is dense in $F$, it follows
  that $q$ maps it into a dense subspace of $F/\Ioc$. Therefore,
  $\widetilde{T}\widetilde{\varphi^\otimes}$ is the identity on a
  dense subspace of $F/\Ioc$, hence on all of $F/\Ioc$.
\end{proof}

\begin{remark}
  Note that the isometry from $F/\Ioc$ onto $E_{[p]}$ constructed in
  the proof of Theorem~\ref{main} sends
  \begin{math}
    [x_1]\otimes\dots\otimes[x_n]+\Ioc
  \end{math}
  into $\bigl[x_1^{p_1/p}\cdots x_n^{p_n/p}\bigr]$, while its
  inverse sends $[x]$ to
  \begin{math}
    [x]\otimes\bigl[\abs{x}\bigr]\otimes\dots\otimes
      \bigl[\abs{x}\bigr]+\Ioc
  \end{math}
  for every $x$.
\end{remark}

Applying the theorem with $p_1=\dots=p_n=1$, we obtain the following
corollary, which extends the main result of~\cite{BBPTT}; see
also~\cite{BB}.

\begin{corollary}
  Suppose that $E$ is a Banach lattice. Let $\Ioc$ be the closed ideal
  in $E\ftp\dots\ftp E$ generated by the elementary tensors
  $x_1\otimes\dots\otimes x_n$ where
  $\bigwedge_{i=1}^n\abs{x_i}=0$. Then $\bigl(E\ftp\dots\ftp E\bigr)/\Ioc$ is
  lattice isometric to $E_{[n]}$.
\end{corollary}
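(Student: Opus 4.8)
The plan is to obtain this as an immediate specialization of Theorem~\ref{main} with $p_1=\dots=p_n=1$. The only thing that really needs checking is that the constructions $E_{(p)}$, $\norm{\cdot}_{(p)}$ and $E_{[p]}$ all collapse to the obvious objects when $p=1$. For the vector lattice structure, the new operations in $E_{(1)}$ are $x\oplus y=(x^1+y^1)^{1/1}=x+y$ and $\alpha\odot x=\alpha^{1}x=\alpha x$, so $E_{(1)}=E$ as a vector lattice. For the norm, in the definition of $\norm{\cdot}_{(1)}$ the defining inequality becomes $\abs{x}\le v_1+\dots+v_k$ with $v_i\ge 0$; taking $k=1$ and $v_1=\abs{x}$ gives $\norm{x}_{(1)}\le\norm{x}$, while for the reverse inequality I would use that the ambient norm is a lattice norm together with the triangle inequality, namely $\norm{x}=\norm{\abs{x}}\le\norm{v_1+\dots+v_k}\le\sum_{i=1}^k\norm{v_i}$. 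Hence $\norm{\cdot}_{(1)}=\norm{\cdot}$, its kernel is $\{0\}$, and since $E$ is already a Banach lattice its completion is itself, so $E_{[1]}=E$.

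Granting these identifications, I would observe that $F=E_{[1]}\ftp\dots\ftp E_{[1]}$ becomes $E\ftp\dots\ftp E$ with $n$ factors, that each class $[x_i]\in E_{[1]}$ is simply $x_i$, so the ideal $\Ioc$ of Theorem~\ref{main} is exactly the norm-closed ideal generated by the elementary tensors $x_1\otimes\dots\otimes x_n$ with $\bigwedge_{i=1}^n\abs{x_i}=0$, and that $p=p_1+\dots+p_n=n$, whence $E_{[p]}=E_{[n]}$. Theorem~\ref{main} then yields directly that $\bigl(E\ftp\dots\ftp E\bigr)/\Ioc$ is lattice isometric to $E_{[n]}$.

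To make contact with~\cite{BBPTT}, I would finally remark that in the case $n=2$ the condition $\abs{x_1}\wedge\abs{x_2}=0$ is just $x_1\perp x_2$, so $\Ioc$ is the closed ideal generated by the elementary tensors $x\otimes y$ with $x\perp y$, and $E_{[2]}$ is the $2$-concavification considered there; thus the corollary recovers and extends their result. There is no genuine obstacle in this argument: the substance is entirely contained in Theorem~\ref{main}, and the only point requiring a moment of care is the verification that $\norm{\cdot}_{(1)}=\norm{\cdot}$, which rests on the fact that the norm of a Banach lattice is a lattice norm.
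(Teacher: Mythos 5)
Your proposal is correct and follows exactly the paper's route: the corollary is stated as an immediate specialization of Theorem~\ref{main} with $p_1=\dots=p_n=1$, and your verification that $E_{(1)}=E$, $\norm{\cdot}_{(1)}=\norm{\cdot}$, and $E_{[1]}=E$ is just the (correct) unwinding of definitions that the paper leaves implicit.
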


Recall that if $p<1$ then $E_{[p]}=E^{(q)}$, the $q$-convexification
of $E$ where $q=\frac{1}{p}$. Hence, putting $q_i=\frac{1}{p_i}$ in
the theorem, we obtain the following.

\begin{corollary}
  Suppose that $E$ is a Banach lattice $q_1,\dots,q_n$ are positive
  reals such that their geometric mean
  \begin{math}
    q:=\bigl(\frac{1}{q_1}+\dots+\frac{1}{q_n}\bigr)^{-1}
  \end{math}
  satisfies $q\ge 1$. Let $\Ioc$ be the closed ideal
  in $E^{(q_1)}\ftp\dots\ftp E^{(q_n)}$ generated by the elementary tensors
  $x_1\otimes\dots\otimes x_n$ where
  $\bigwedge_{i=1}^n\abs{x_i}=0$. Then
  $\bigl(E^{(q_1)}\ftp\dots\ftp E^{(q_n)}\bigr)/\Ioc$ is
  lattice isometric to $E^{(q)}$.
\end{corollary}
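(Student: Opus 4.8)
The plan is to deduce this corollary directly from Theorem~\ref{main} by the substitution $p_i=\frac{1}{q_i}$ for $i=1,\dots,n$. With this choice $p:=p_1+\dots+p_n=\frac{1}{q_1}+\dots+\frac{1}{q_n}=\frac{1}{q}$, so the hypothesis $q\ge 1$ is precisely the condition $p\le 1$; and since every $p_i$ is positive with $\sum_i p_i=p\le 1$, each $p_i$ also satisfies $0<p_i\le 1$.

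Next I would invoke the identification of $E_{[r]}$ with a convexification that is recorded in the discussion following the definition of $\norm{\cdot}_{(r)}$: for $0<r<1$ one has $\norm{\cdot}_{(r)}=\norm{\cdot}^{r}$, this seminorm has trivial kernel, it already makes $E_{(r)}$ complete, and $E_{[r]}=E_{(r)}=E^{(1/r)}$; while for $r=1$ the triangle inequality (equivalently, $1$-convexity with constant $1$) gives $\norm{\cdot}_{(1)}=\norm{\cdot}$, so $E_{[1]}=E=E^{(1)}$. Applying this with $r=p_i$ gives $E_{[p_i]}=E^{(q_i)}$, and with $r=p$ gives $E_{[p]}=E^{(q)}$, all as Banach lattices (under the convention $E^{(1)}=E$). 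In particular the canonical map $x\mapsto[x]$ from $E_{(r)}$ onto $E_{[r]}$ is the identity in each of these cases, since the relevant seminorm kernel is $\{0\}$.

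With these identifications in hand, the Banach lattice $F=E_{[p_1]}\ftp\dots\ftp E_{[p_n]}$ appearing in Theorem~\ref{main} is literally $E^{(q_1)}\ftp\dots\ftp E^{(q_n)}$, and the ideal $\Ioc$ there, generated by the elementary tensors $[x_1]\otimes\dots\otimes[x_n]$ with $\bigwedge_{i=1}^n\abs{x_i}=0$, coincides with the ideal $\Ioc$ of the corollary: indeed $[x_i]=x_i$, and the order and lattice operations (hence the meaning of the condition $\bigwedge_{i=1}^n\abs{x_i}=0$) on $E$, on $E_{(p_i)}$, and on $E^{(q_i)}$ are all the same. Theorem~\ref{main} then gives that $\bigl(E^{(q_1)}\ftp\dots\ftp E^{(q_n)}\bigr)/\Ioc=F/\Ioc$ is lattice isometric to $E_{[p]}=E^{(q)}$, which is the assertion.

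I do not expect a genuine obstacle: the whole argument is a change of variables combined with the already-established dictionary $E_{[r]}\leftrightarrow E^{(1/r)}$. The only point needing a moment's care is the boundary value $q=1$ (equivalently $p=1$), where one must separately note $E^{(1)}=E=E_{[1]}$; but this is immediate from the triangle inequality as above, so it causes no difficulty.
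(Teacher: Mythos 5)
Your proof is correct and is essentially the same as the paper's, which derives the corollary from Theorem~\ref{main} by the substitution $p_i=1/q_i$ together with the dictionary $E_{[p]}=E^{(1/p)}$ for $p\le 1$; you simply spell out the routine identifications (trivial seminorm kernel, $E_{[p_i]}=E^{(q_i)}$, and the boundary case $p=1$) that the paper leaves implicit.
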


%\bibliographystyle{amsalpha}
%\bibliography{../../tv}

\end{document}